\newtheorem{lemma}{Lemma}[section]
\newtheorem{proposition}[lemma]{Proposition}
\newtheorem{theorem}[lemma]{Theorem}
\newtheorem{corollary}[lemma]{Corollary}
\newtheorem{question}[lemma]{Question}
\theoremstyle{definition}
\newtheorem{definition}[lemma]{Definition}
\newtheorem{remark}[lemma]{Remark}
\newtheorem{example}[lemma]{Example}
\def\K{\mathbb K}
\def\N{\mathbb N}
\def\R{\mathbb R}
\def\Z{\mathbb Z}
\def\Q{\mathbb Q}
\def\Aut{\mathrm{Aut}}
\def\Se{\mathfrak S}
\def\X{\mathfrak X}
\newcommand*{\card}[1]{\left\vert #1 \right\vert}
\newcommand{\htop}{h_{top}}
\newcommand{\hm}{h_{mes}}
\newcommand{\halg}{h_{alg}}
\newcommand{\hset}{\texorpdfstring{\ensuremath{h_{set}}}{h-set}}
\def\ent{\mathrm{ent}}
\def\hti{\widetilde{h}}
\newcommand{\F}{\mathcal F}
\newcommand{\LCG}{\textbf{\textup{LC}}}
\newcommand{\LC}{{}_{\K}\LCG}
\newcommand{\LLC}{{}_{\K}\textbf{\textup{LLC}}}
\newcommand{\LCA}{\textbf{\textup{LCA}}}
\newcommand{\CC}{\LCA_{cc}}
\newcommand{\Vect}{{}_{\K}\textbf{\textup{Vect}}}
\newcommand{\Flow}{\mathrm{Flow}}
\newcommand{\bcL}{\mathbf{\mathcal{L}}}
\newcommand{\Lqm}{\bcL_{qm}}
\newcommand{\barLqm}{\overline{\bcL}_{qm}}
\newcommand{\Ab}{\textbf{\textup{Ab}}}
\newcommand{\TDLC}{\textbf{\textup{TDLC}}}
\newcommand{\LCO}{\mathcal{LCO}}
\newcommand{\CO}{\mathcal{CO}}
\def\Inv{\mathrm{Inv}}
\numberwithin{equation}{section}
\newlength{\bibitemsep}\setlength{\bibitemsep}{-0.1\baselineskip plus .0\baselineskip minus .0\baselineskip}
\newlength{\bibparskip}\setlength{\bibparskip}{0pt}
\let\oldthebibliography\thebibliography
\renewcommand\thebibliography[1]{%
  \oldthebibliography{#1}%
  \setlength{\parskip}{\bibitemsep}%
  \setlength{\itemsep}{\bibparskip}%
}
\author{Ilaria Castellano \and Dikran Dikranjan \and Domenico Freni \and Anna Giordano Bruno \and Daniele Toller}
\title{Intrinsic entropy for generalized quasimetric semilattices}
\date{Dedicated to the memory of Silvana Rinauro}
\begin{document}

\maketitle

\abstract{We introduce the notion of intrinsic semilattice entropy $\hti$ in the category $\Lqm$ of generalized quasimetric semilattices and contractive homomorphisms. By using appropriate categories $\mathfrak X$ and functors $F:\mathfrak X\to\Lqm$, we find specific known entropies $\hti_\X$ on $\X$ as intrinsic functorial entropies, that is, as $\hti_\mathfrak X=\hti\circ F$. These entropies are the intrinsic algebraic entropy, the algebraic and the topological entropies for locally linearly compact vector spaces, the topological entropy for totally disconnected locally compact groups and the algebraic entropy for compactly covered locally compact abelian groups.}

\bigskip
\noindent\emph{Keywords}: intrinsic entropy, quasimetric semilattice, functorial entropy, algebraic entropy, topological entropy, abelian group, vector space, locally compact abelian group, endomorphism, algebraic dynamical system.

\smallskip
\noindent\emph{2020 Mathematics Subject Classification}: 16B50, 20M10, 20K30, 22A26, 22D05, 22D40, 37A35, 37B40, 54C70.

\section{Introduction}

Entropy has been intensively studied in ergodic theory and topological dynamics since the introduction of the measure entropy $\hm$ and the topological entropy $\htop$ for single selfmaps roughly sixty years ago (see \cite{AKM,B,Kolm,Sinai}). In connection with the topological entropy, the algebraic entropy $\halg$ of group endomorphisms was introduced somewhat later (see \cite{AKM,DG,GBSp,P, Pet1,W}), and the adjoint algebraic entropy  $\halg^*$ more recently (see \cite{DGS,GK}). Moreover, the set-theoretic entropy $\hset$ of selfmaps of a set provided with no further structure was defined in \cite{AZD} (see also \cite{DGV,G0,GBVstr}), and used for computing the topological entropy of generalized shifts. For the details about the origin of all these entropies as well as the connections between them, see the surveys \cite{DG-islam,DSV}.

\smallskip
In the presence of such a wealth of entropies, it gradually became clear that a common approach covering all (or at least, most) of them could be very helpful. Such a  common approach  was proposed in \cite{D} aiming at a uniform argument for the basic properties of the above mentioned entropies.  This argument was elaborated, partially in collaboration with Simone Virili, in full detail and proofs in \cite{DGB_PC,DGBuatc}. 

Recall that  an entropy over a category $\X$  is an invariant $h_\X\colon \Flow_\X\to \R_{\geq0}\cup\{\infty\}$ of the category $ \Flow_\X$ of all flows of $\X$: a flow of $\X$ is a pair $(X,\phi)$ consisting of an object $X$ of $\mathfrak X$ and an endomorphism $\phi: X\to X$, whereas a morphism between flows, say $(X,\phi)$ and $(Y,\psi)$, is given by a morphism $\alpha: X \to Y$ of $\mathfrak X$ such that $\alpha\circ\phi=\psi\circ\alpha$.  Usually, one denotes $h_\X(X,\phi)$ simply by $h_\X(\phi)$ for 
 a flow $(X,\phi)$ of $\X$. 

The main idea of the unifying approach from \cite{DGB_PC,DGBuatc} was to define the \emph{semigroup entropy} $h_\Se\colon\Flow_\Se\to\R_{\geq0}\cup\{\infty\}$, where $\Se$ is the category of normed semigroups $(S,v)$ whose morphisms are all semigroup homomorphisms that are contractive with respect to the norm.  
In this way, whenever a category $\X$ allows for a functor $F : \Flow_\X \to \Flow_\Se$, one can obtain an entropy $h_F$ over $\X$ by defining $h_F=h_{\Se}\circ F\colon \Flow_\X\to \R_{\geq0}\cup\{\infty\}$. The entropy $h_F$ was called  \emph{functorial entropy} in \cite{DGBuatc}. As shown in \cite{DGB_PC,DGBuatc}, all entropies listed above (measure entropy, topological entropy, algebraic entropy, adjoint algebraic entropy, set-theoretic entropy)  can be obtained as functorial entropies for appropriate functors $F:\Flow_\X \to \Flow_\Se$, where $\X$ ranges among categories (such as, respectively, the category of measure space, the category of compact spaces, the category of groups and the category of locally compact groups, the category of sets). In all specific cases the functors $F:\Flow_\X \to \Flow_\Se$ are induced from functors $\X\to \Se$ in the obvious way.

Meanwhile, the intrinsic algebraic entropy for endomorphisms of abelian groups was introduced in \cite{DGSV}. Its definition, for a specific endomorphism $\phi:G\to G$ of an abelian group $G$, is based on the subtle notion of \emph{$\phi$-inert subgroup}, inspired by the well-known notion of inert subgroup in the non-abelian context (see \cite{DDR} for further details). 
Later on, the algebraic entropy and the topological entropy of continuous endomorphisms of locally linearly compact vector spaces were defined in \cite{CGBalg,CGBtop}, respectively (see also \cite{Cas19,Cas20}). In these cases, 
the computation of the entropy of an endomorphism $\phi$ depends on the behavior of some subgroups that turn out to be again $\phi$-inert. So in a purely informal way we call those ``intrinsic-like'' entropies. 

Moreover, the general definitions of the topological entropy $\htop$ (see \cite{DSV,GBV}) and the algebraic entropy $\halg$ (see \cite{V}) for locally compact groups, involving Haar measure, are not ``intrinsic'' -- they are covered by a suitable generalization of the scheme in \cite{DGBuatc} with normed semigroups.
Nevertheless,  for totally disconnected locally compact groups and for locally compact strongly compactly covered groups, respectively, $h_{top}$ and $h_{alg}$ allow for an alternative ``intrinsic'' description, which is  handier since it avoids the use of Haar measure, and the limit superior in the general definition becomes a limit (see \cite{GBV,GBST} respectively).

As pointed out in \cite{DGBuatc}, the unifying approach from \cite{DGB_PC,DGBuatc} does not (and cannot) cover these intrinsic-like entropies. 
So, the aim of this paper is to elaborate a common approach to them.
A careful analysis shows that the common feature of all of them is the presence of a semilattice $S$ provided with a kind of ``non-symmetric distance" which may take also value $\infty$, namely a \emph{generalized quasimetric} (rather than a norm as one had so far in \cite{DGBuatc}). We develop the necessary machinery regarding generalized quasimetric semilattices in the forthcoming project \cite{CDFGKT}, starting from the seminal work by Nakamura \cite{Naka} and from similar structures used in topological algebra (see \cite{AD}) and in computer science (see \cite{Sch}). 

Here we introduce and study the notion of {$\phi$-inert element} of a generalized quasimetric semilattice $S$ with respect to a contractive endomorphism $\phi: S \to S$. By analogy with the approach in \cite{DGBuatc}, we define the \emph{intrinsic semilattice entropy} $\hti:\Flow_{\Lqm}\to\R_{\geq0}\cup\{\infty\}$, where $\Lqm$ denotes the category of generalized quasimetric semilattices and their contractive homomorphisms. Moreover, for a category $\X$ and a functor $F:\Flow_\X\to \Flow_{\Lqm}$, we define the \emph{intrinsic functorial entropy} $\hti_F:\Flow_\X\to\R_{\geq0}\cup\{\infty\}$ by  $\hti_F=\hti\circ F$, and we show how the above mentioned specific intrinsic-like entropies can be obtained from this general scheme as intrinsic functorial entropies. Again, in almost all cases the functor $F:\Flow_\X\to\Flow_{\Lqm}$ is induced by a functor $\X\to\Lqm$.

\smallskip
The paper is organized as follows.
In Section~\ref{gqs-sec} we introduce the category $\Lqm$ we are mainly interested in, giving basic properties and examples.

In Section~\ref{dyn-sec} we start studying the dynamics of a generalized quasimetric semilattice $(S,d)\in \Lqm$. First, in \S\ref{f-inert}, we investigate the behavior of elements of $(S,d)$ under the action of a single contractive endomorphism $\phi$ and we define $\phi$-invariant and $\phi$-inert elements. Then, in \S\ref{fully-sec}, we introduce fully invariant, fully inert and uniformly fully inert elements of $(S,d)$ by analogy with \cite{BL,DDR,DDS}.
In \S\ref{traj-sec} we examine the properties of the trajectories of $\phi$-inert elements in order to introduce the intrinsic semilattice entropy in \S\ref{intrent-sec}.

Section~\ref{entropy-sec} is devoted to the study of the intrinsic semilattice entropy $\hti$. In \S\ref{basic-sec} we propose some basic properties of $\hti$ and we show that it is actually an invariant of the category $\Flow_{\Lqm}$ (see Corollary~\ref{inv:conj}). The whole \S\ref{LLsec} is dedicated to the so-called logarithmic law, that is, we try to answer the following question: given a contractive endomorphism $\phi:S\to S$ of a generalized quasimetric semilattice $S$ and $k\in\N$, is it true that $\widetilde h(\phi^k ) = k \cdot \widetilde h(\phi)$? 
%
%
We verify the inequality $\widetilde h(\phi^k ) \geq k \cdot \widetilde h(\phi)$ (see Corollary~\ref{ll1/2}), while the opposite one is proved only under some additional restraints. 
Trying to carry over to this framework the proof of the logarithmic law stated in \cite{DGSV} for the intrinsic algebraic entropy, an error was found in one of the steps
of the argument in \cite{DGSV}, and that proof has been corrected in \cite{TV}. Nevertheless, the argument used in \cite{TV} cannot be extended to our current setting.
We expect that the answer to the above general question is negative, but we did not find a counterexample yet.


In the final Section~\ref{known-sec} we put the general scheme to work and we show how the above mentioned specific intrinsic-like entropies can be recovered as intrinsic functorial entropies.

\subsection*{Dedication and acknowledgements}

This paper is dedicated to the memory of our friend and colleague Silvana Rinauro, whose contributions towards inertial properties in groups, obtained jointly with U. Dardano (see \cite{DDR,DR1,DR2,DR4}), triggered the key notion of $\phi$-inert subgroup, which is the core of the notion of intrinsic entropy.  

We warmly thank our friend and colleague Nicol\`o Zava and the referee for the useful comments and suggestions.

\subsection*{Notation and terminology}

We denote by $\Z$ the integers, by $\N$ the natural numbers and by $\N_+=\N\setminus\{0\}$ the positive integers. Moreover, $\R$ is the set of reals and $\R_{\geq0}=\{x\in \R\mid x\geq 0\}$.

Let $\mathfrak X$ be a category. With some abuse of notation we write $X\in\mathfrak X$ to say that $X\in\textup{Ob}(\mathfrak X)$. 
If $\mathfrak Y$ is a full subcategory of $\mathfrak X$, we briefly write $\mathfrak Y\subseteq \mathfrak X$.

A  \emph{flow}\index{flow} of $\mathfrak X$ is a pair $(X,\phi)$, where $X$ is an object of $\mathfrak X$ and $\phi: X\to X$ is an endomorphism in $\mathfrak X$.
A morphism between two flows $(X,\phi)$ and $(Y,\psi)$ of $\mathfrak X$ is a morphism $\alpha: X \to Y$ in $\mathfrak X$ such that $\psi\circ\alpha=\alpha\circ\phi$. 
This defines the category {$\Flow_{\mathfrak X}$} of flows of $\mathfrak X$. 

Clearly, in case $F:\mathfrak X\to \mathfrak Y$ is a functor, it induces a functor $\overline F:\Flow_\X\to \Flow_\mathfrak Y$ by letting $\overline F(X,\phi)=(F(X),F(\phi))$ for every $(X,\phi)\in\Flow_\X$ and $\overline F(\alpha)=F(\alpha)$ in case $\alpha:(X,\phi)\to(X',\phi')$ is a morphism in $\Flow_\X$.

\section{Generalized quasimetric semilattices and generalized normed semigroups}\label{gqs-sec}

\subsection{Semilattices with a generalized quasimetric}

Here we follow the approach from \cite{CDFGKT}.

\begin{definition}
A \emph{generalized quasimetric} on a non-empty set $S$ is a function $d: S \times S \to \R_{\geq0} \cup \{\infty\}$ such that:
\begin{enumerate}[(QM1)]
\item\label{qm1} for $x,y\in S$, $d(x,y)=d(y,x)=0$ if and only if $x=y$;
\item\label{qm2} for every $x,y,z\in S$, $d(x,z)\leq d(x,y)+d(y,z)$;
with the standard convention that $r<r + \infty = \infty + \infty = \infty$ for every $r\in\R_{\geq0}$.
\end{enumerate}
The pair $(S,d)$ is called \emph{generalized quasimetric space}.
\end{definition}

By analogy with the classical case of quasimetrics, we give the following natural definition.

\begin{definition}
Let $(S_1,d_1)$ and $(S_2,d_2)$ be generalized quasimetric spaces. Then a map $\alpha:S_1\to S_2$ is an \emph{isometry} if $d_2(\alpha(x),\alpha (y))=d_1(x,y)$ for every $x,y\in S_1$.
\end{definition}

For a generalized quasimetric space $(S,d)$, let $\leq_d$ be the partial order on $(S,d)$ defined by letting, for $x,y\in S$, 
\begin{equation}\label{0eq}
x\leq_d y\ \text{if and only if}\ d(y,x)=0.
\end{equation}
This is the dual of the specialization order of $d$.  

\begin{definition}[See \cite{CDFGKT}]\label{Def1} 
A generalized quasimetric space $(S,d)$ is a \emph{generalized quasimetric semilattice} if $(S,\leq_d)$ is a join-semilattice with bottom element $0$; the semilattice operation is denoted by $+$. 
Moreover, $(S,d)$ is \emph{invariant} if
\begin{enumerate}[(QM3)]
\item\label{qm3} $d(x,y)= d(x,x+y)$ for all $x,y\in S$.
\end{enumerate}
\end{definition}

From now on, whenever $(S,d)$ is assumed to be a  generalized quasimetric semilattice, we omit the appearance of the subscript in the notation, i.e.,  we use $\leq$ instead of $\leq_d$.

\smallskip
A semilattice with bottom element is a commutative monoid with all elements idempotent, so a semilattice homomorphism $\phi\colon S\to S'$ between two semilattices $S$ and $S'$ with bottom elements $0$ and $0'$ respectively, is a monoid homomorphism. Moreover, it is natural to define morphisms between invariant generalized quasimetric semilattices as follows.

\begin{definition}
A semilattice homomorphism $\phi:(S,d)\to (S',d')$ between two invariant generalized quasimetric semilattices is \emph{contractive} if $d'(\phi(x),\phi(y))\leq d(x,y)$ for every $x,y\in S$.
\end{definition}

Let $\Lqm$ denote the category of all invariant generalized quasimetric semilattices (i.e., satisfying~\ref{qm1},~\ref{qm2},~\ref{qm3}) and their contractive (semilattice) homomorphisms.

\medskip
If $(S,d)\in\Lqm$, then a simple application of~\ref{qm2} and~\ref{qm3} shows that  the function $d(-,y):S\to \R_{\geq0}$ is decreasing for every $y \in S$, while $d(x,-):S\to \R_{\geq0}$ is increasing for every $x \in S$,
that is:  
\begin{enumerate}[(M1)]
\item\label{1mon} if $x,x',y\in S$ and $x\leq x'$, then $d(x',y)\leq d(x,y)$;
\item\label{2mon} if $x,y,y'\in S$ and $y\leq y'$, then $d(x,y)\leq d(x,y')$.
\end{enumerate}

As further examples show, it is useful to allow the objects $(S,d)$ of $\Lqm$ to satisfy the  additional property: 
\begin{enumerate}[(OC)]
\item\label{x+y'} if $x,y,z\in S$ and $x\leq y\leq z$, then $d(x,z)=d(x,y)+d(y,z)$.
\end{enumerate}
In \cite{Sch}, $(S,d)\in\Lqm$ is called \emph{order-convex} if it satisfies \ref{x+y'}.
One can see that~\ref{x+y'} is equivalent to $d(x,y+y')=d(x,y)+d(x+y,y')$ for all triples $x,y,y'\in S$ (see \cite{CDFGKT}). 

As proved in \cite{CDFGKT}, \ref{qm3} is equivalent to
\begin{equation}\label{x+x'}
d(x+x',y+y')\leq d(x,y)+d(x',y')\ \text{for every}\ x,x',y,y'\in S,
\end{equation}
and an example is given witnessing that~\ref{qm3} is strictly weaker than~\ref{x+y'}.

Let $\barLqm$ be the full subcategory of $\Lqm$ with objects all $S\in\Lqm$ satisfying~\ref{x+y'}.


\subsection{The closeness relation}

\begin{definition}
Let $S\in \Lqm$. Two elements $x,y \in S$ are \emph{close}, denoted by $x \sim y $, if $d(x,y) < \infty$ and $d(y,x)<\infty$.
\end{definition}

It is easy to see that $\sim$ is an equivalence relation on $S\in\Lqm$ (the transitivity property holds by~\ref{qm2}).

\medskip
Let $(S,d)\in\Lqm$ and let
$$\F_d(S)=\{x\in S\mid d(0,x)<\infty\}\subseteq S.$$
Since by definition $d(x,0)=0$ for every $x\in S$, clearly $\F_d(S)=[0]_\sim$.

\begin{remark}
Let $S\in\Lqm$. Then $\sim$ is a congruence on $S$. In fact, for $x,x',y,y'\in S$, if $x' \sim x$ and $y' \sim y$, then also $x' + y' \sim x+y$ by \eqref{x+x'}.

Therefore, if $H$ is a subsemilattice of $S$, then so is $$H^{\sim} = \{x\in S\mid \exists y\in H,\ x\sim y\}=\bigcup_{y\in H}[y]_{\sim}.$$
In particular, if $S\in\Lqm$, then $\F_d(S) = \{0\}^{\sim}$ is a subsemilattice of $S$. 
\end{remark}

\subsection{Examples of generalized quasimetric semilattices}

Here we collect some examples that are used in Section~\ref{known-sec} (see also \cite{CDFGKT}).  

%

\begin{example}\label{ex:ab} 
Let $G$ be a group and denote by $\mathcal S(G)$ the family of all subgroups of $G$.  For $H,H' \in\mathcal S(G)$ with $H \subseteq H'$, the index of $H$ in $H'$ is denoted by $[H':H]$.
\begin{enumerate}[(a)] 
\item If $G$ is abelian, $\mathcal S(G)$ can be considered as a semilattice whose elements are partially ordered by inclusion and join-operation $H+H'$ for $H,H' \in\mathcal S(G)$. This gives a semilattice $\mathcal S^\vee(G)= (\mathcal S(G), +, \subseteq)$ with generalized quasimetric defined by 
$$d_{[\,\colon]}(H,H') = \log [H+H':H]\ \text{for} \ H,H' \in \mathcal S(G).$$

\item The set $\mathcal S(G)$ can be partially ordered by inverse inclusion even when $G$ is not necessarily abelian. Hence $\mathcal S^\wedge(G)=(\mathcal S(G),\cap,\supseteq)$ can be regarded as a semilattice with the operation $H \cap H'$ for $H,H'\in\mathcal S(G)$. In such a case, one has the generalized quasimetric defined by 
$$d^*_{[\,\colon]}(H,H') = \log [H:H\cap H']\ \text{for}\ H,H' \in \mathcal S(G).$$
\end{enumerate}
The generalized quasimetrics $d_{[\,\colon]}$ and $d^*_{[\,\colon]}$ satisfy all the properties~\ref{qm1},~\ref{qm2},~\ref{qm3} and~\ref{x+y'}; so $(\mathcal S^\vee(G),d_{[\,\colon]})\in\barLqm$ and $(\mathcal S^\wedge(G),d^*_{[\,\colon]})\in\barLqm$. 
Clearly, $d^*_{[\,\colon]}(H,H')=d_{[\,\colon]}(H',H)$ for all $H,H'\in \mathcal S(G)$ when $G$ is abelian, that is, $d_{[\,\colon]}$ coincides with the dual metric of $d^*_{[\,\colon]}$.

In both cases the closeness relation is known under the name \emph{commensurability}, that is, $H,H'\in\mathcal S(G)$ are commensurable if $[H:H\cap H']$ and $[H':H\cap H']$ are finite.  Moreover, $\F_{d_{[\,\colon]}}(\mathcal S^\vee(G))$ is the family of all finite subgroups and $\F_{d^*_{[\,\colon]}}(\mathcal S^\wedge(G))$ is the family of all finite-index subgroups of $G$.
\end{example}

The next obviously generalizes the previous example with $i(G) = \log |G|$.

\begin{example}\label{ex:inv}
Let $M$ be a unitary $R$-module, where $R$ is a unitary commutative ring. 
Now let $\mathcal S^\vee(M)$ be the lattice $\mathcal L(M)$ of all submodules of $M$, considered as a semilattice with operation $H + H'$ for $H,H' \in \mathcal L(M)$ and let $\mathcal S^\wedge(M)$ be the lattice $\mathcal L(M)$ considered as a semilattice with operation $H\cap H'$ for $H,H'\in\mathcal L(M)$. Fix a module invariant $i$, that is, $i(M)\in \R_{\geq0} \cup \{\infty\}$ and $i(M) = i(N)$ whenever $M \cong N$. Moreover, assume that $i$ is subadditive, that is, $i(M) \leq i(N) +  i(M/N)$ when $N$ is a submodule of $M$.  

Define the generalized quasimetrics $d_i$ on $\mathcal S^\vee(M)$ and $d_i^*$ on $\mathcal S^\wedge(M)$ by 
$$d_i(H,H') = i((H+H')/H)\ \text{and}\  d_i^*(H,H')=i(H/(H\cap H'))\ \text{for}\ H,H' \in S.$$

If $R$ is a field, then one is left with the only possible invariant $i = \dim_R$ and $M$ is a vector space over $R$.  Moreover, $d_i$ and $d_i^*$ satisfy all the properties~\ref{qm1},~\ref{qm2},~\ref{qm3},~\ref{x+y'}, and so $(\mathcal L(M),d_{\dim_R})\in\barLqm$ and $(\mathcal L(M),d_i^*)\in\barLqm$. Clearly, $\F_{d_{\dim_R}}(\mathcal S^\vee(M))$ is the family of all finite-dimensional subspaces of $M$ and $\F_{d^*_{\dim_R}}(\mathcal S^\wedge(M))$ is the family of all subspaces of $M$ with finite co-dimension.
\end{example}

\begin{remark}
In all cases considered above we have a concrete category $\X$ with a forgetful functor $U : \X \to {\bf Set}$ with plenty of nice properties.
For example, for $X\in\X$, the poset $\mathcal L(X)$ of all subobjects of $X$ in $\X$ is obtained from the lifting of subsets of $\mathcal P(U(X))$ along $U$.  
Hence, the meet in $\mathcal L(X)$ is simply the subobject with underlying set the intersection. 

In the above commutative examples $\mathcal L(X)$ is a complete lattice, so it has two semilattice structures which are related by an isomorphism or anti-isomorphism.
\end{remark}

\section{Dynamics in $\Lqm$}\label{dyn-sec}

\subsection{The $\phi$-invariant and $\phi$-inert elements}\label{f-inert}

In this section we study the interaction of single elements of some $(S,d)\in \Lqm$
with endomorphisms of $(S,d)$ in $\Lqm$. 


\begin{definition} Let $((S,d),\phi)\in\Flow_{\Lqm}$. An element $x\in S$ is called:
\begin{enumerate}[(i)]
\item \emph{$\phi$-invariant} if $d(x,\phi(x))=0$, (i.e., if $\phi(x)\leq x$);
\item \emph{$\phi$-inert} if $d(x,\phi(x))< \infty$. 
\end{enumerate}
We denote respectively by $\Inv_\phi(S)$ and $\mathcal I_\phi(S)$ the subsets of the $\phi$-invariant and the $\phi$-inert elements of $S$
(we shall see below that these are actually subsemilattices of $S$). Obviously, $\Inv_\phi(S) \subseteq \mathcal I_\phi(S)$.
\end{definition}

Next we see that 
large supply of $\phi$-inert elements is provided by the elements of $S$ close to $0$, shortly, 
$\F_d(S) \subseteq \mathcal I_\phi(S).$

\begin{remark}\label{F<I}
Let $((S,d),\phi)\in\Flow_{\Lqm}$ and $x\in S$. 
\begin{enumerate}[(a)]
\item If $x\in \F_d(S)$, then $\phi(x)\in \F_d(S)$ and $x\in \mathcal I_\phi(S)$. In fact, $d(0,x)<\infty$ implies $d(0,\phi(x))=d(\phi(0),\phi(x))\leq d(0,x)<\infty.$
Then $d(x,\phi(x))\leq d(0,\phi(x))$ by~\ref{1mon}.
\item The element $x$ is {$\phi$-invariant} precisely when $x=x+\phi(x)$. 
\end{enumerate}
\end{remark}

We show some properties of the $\phi$-inert elements, starting with the verification that $\mathcal I_\phi(S)$ is $\phi$-invariant, that is, $\phi(\mathcal I_\phi(S))\subseteq \mathcal I_\phi(S)$.

\begin{lemma}\label{subsemilat}
Let $((S,d),\phi)\in\Flow_{\Lqm}$. Then 
\begin{enumerate}[(a)]
\item $\phi^n(\mathcal I_\phi(S))\subseteq \mathcal I_\phi(S)$ for all $n\in\N$; in particular, if $x\in S$ is $\phi$-inert then $\phi(x)$ is $\phi$-inert; 
\item $\Inv_\phi(S)$ is a subsemilattice of $S$;
\item 
$\mathcal I_\phi(S)$ is a subsemilattice of $S$.
\end{enumerate}
\end{lemma}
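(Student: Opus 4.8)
The plan is to prove the three statements in order, exploiting the subadditivity estimate~\eqref{x+x'} (which is equivalent to~\ref{qm3}) and the monotonicity properties~\ref{1mon} and~\ref{2mon} throughout. For part~(a), I would first treat the case $n=1$, that is, show that $\phi(x)$ is $\phi$-inert whenever $x$ is. The natural computation is $d(\phi(x),\phi(\phi(x)))\leq d(x,\phi(x))<\infty$, where the inequality is simply contractivity of $\phi$ applied to the pair $(x,\phi(x))$. This immediately gives $\phi(x)\in\mathcal I_\phi(S)$. The general case $\phi^n(\mathcal I_\phi(S))\subseteq\mathcal I_\phi(S)$ then follows by an easy induction on $n$, since $\phi^{n+1}(x)=\phi(\phi^n(x))$ and $\phi^n(x)$ is already $\phi$-inert by the inductive hypothesis.

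For part~(b), I must check that $\Inv_\phi(S)$ is closed under the join operation $+$ (it contains $0$ since $d(0,\phi(0))=d(0,0)=0$). So let $x,y\in\Inv_\phi(S)$, meaning $d(x,\phi(x))=0$ and $d(y,\phi(y))=0$. I want $d(x+y,\phi(x+y))=0$. Since $\phi$ is a semilattice homomorphism, $\phi(x+y)=\phi(x)+\phi(y)$, so the target is $d(x+y,\phi(x)+\phi(y))=0$. Applying the subadditivity estimate~\eqref{x+x'} with the pairs $(x,\phi(x))$ and $(y,\phi(y))$ yields $d(x+y,\phi(x)+\phi(y))\leq d(x,\phi(x))+d(y,\phi(y))=0$, as desired. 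Alternatively, and perhaps more transparently, I could use the order-theoretic reformulation from Remark~\ref{F<I}: $x$ is $\phi$-invariant iff $\phi(x)\leq x$, and since $\phi(x)\leq x$ and $\phi(y)\leq y$ give $\phi(x+y)=\phi(x)+\phi(y)\leq x+y$ by compatibility of the join with the order, we again conclude $x+y\in\Inv_\phi(S)$.

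Part~(c) is structurally identical to~(b) but with finite in place of zero distances. Again $0\in\mathcal I_\phi(S)$ trivially, and for $x,y\in\mathcal I_\phi(S)$ the same application of~\eqref{x+x'} gives $d(x+y,\phi(x+y))=d(x+y,\phi(x)+\phi(y))\leq d(x,\phi(x))+d(y,\phi(y))<\infty$, using the convention that a sum of two finite values is finite. Hence $x+y\in\mathcal I_\phi(S)$, and $\mathcal I_\phi(S)$ is a subsemilattice.

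None of these steps presents a genuine obstacle; the proof is essentially a sequence of direct applications of contractivity and subadditivity. The only point requiring mild care is the bookkeeping with the $\infty$ conventions in part~(c) and the observation that contractivity must be applied to the correctly chosen pair in part~(a) (namely $(x,\phi(x))$ rather than $(x,x)$), since it is contractivity, and not merely monotonicity, that transports finiteness of $d(x,\phi(x))$ to finiteness of $d(\phi(x),\phi^2(x))$. I would present~(b) and~(c) together to avoid repeating the same computation, noting that~\eqref{x+x'} and the homomorphism property $\phi(x+y)=\phi(x)+\phi(y)$ do all the work in both cases.
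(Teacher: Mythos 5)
Your proof is correct and follows essentially the same route as the paper: iterated contractivity for part~(a), the order-theoretic observation of Remark~\ref{F<I}(b) for part~(b), and the subadditivity estimate~\eqref{x+x'} for part~(c). The only cosmetic difference is that the paper proves~(a) in one line via $d(\phi^n(x),\phi^{n+1}(x))\leq d(x,\phi(x))$ rather than by induction, and for~(b) it uses only the order-theoretic argument, which you also supply as your alternative.
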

\begin{proof} 
(a) It suffices to observe that $d(\phi^n(x),\phi^{n+1}(x))\leq d(x,\phi(x))$.

(b)  If $x,y	\in S$ are $\phi$-invariant, then $x+y$ is $\phi$-invariant by Remark~\ref{F<I}(b).

(c) By \eqref{x+x'}, one has $0\leq d(x+y,\phi(x+y))=d(x+y,\phi(x)+\phi(y))\leq d(x,\phi(x))+d(y,\phi(y))$ for every $x,y\in S$. Therefore, if $x,y\in S$ are $\phi$-inert, then $x+y$ is $\phi$-inert.
\end{proof}

\subsection{Fully invariant, fully inert and uniformly fully inert elements}\label{fully-sec}

Inspired by the notions introduced and studied in \cite{DDR,DDS}, we give the following.

\begin{definition} 
Let $(S,d)\in\Lqm$. An element $x\in S$ is called:
\begin{enumerate}[(i)]
\item \emph{fully invariant} if $x$ is $\phi$-invariant for every contractive endomorphism $\phi$ of $S$;
\item \emph{fully inert} if $x$ is $\phi$-inert for every contractive endomorphism $\phi$ of $S$;
\item \emph{uniformly fully inert} if there exists $C>0$ such that $d(x, \phi(x))\leq C$ for every contractive endomorphism $\phi$ of $S$.
\end{enumerate}
\end{definition}

In the sequel, given $(S,d)\in\Lqm$, we denote by:
\begin{enumerate}[(i)]
\item $\mathcal I (S)$  the set of all fully inert elements of $S$; 
\item $\Inv(S)$ the set of all fully invariant elements of $S$;
\item $\mathcal I_u(S)$ the set of all uniformly fully inert elements of $S$.
\end{enumerate}
Clearly, $\Inv(S)  \subseteq \mathcal I_u(S)  \subseteq \mathcal I(S)$ and 
$$\mathcal I(S) =\bigcap_{\phi\in\mathrm{End}(S)}\mathcal I_\phi(S), \quad \Inv(S)=\bigcap_{\phi\in\mathrm{End}(S)}\Inv_\phi(S).$$
By Lemma~\ref{subsemilat}(b), $\Inv(S)$ is a subsemilattice of $S$, $\mathcal I(S)$ is a subsemilattice of $S$ by Lemma~\ref{subsemilat}(c), and a similar argument shows that also $\mathcal I_u(S)$ is a subsemilattice of $S$.
 
\begin{lemma}\label{Iu}
Let $(S,d)\in\Lqm$, and $x,y\in S$ with $x\sim y$. 
\begin{enumerate}[(a)]
\item If $\phi$ is a contractive endomorphism of $S$ and $x \in \mathcal I_\phi(S)$, then $y \in \mathcal I_\phi(S)$.
\item If $x \in \mathcal I_u(S)$, then $y \in \mathcal I_u(S)$ (in particular, if $x \in \Inv(S)$, then $y \in \mathcal I_u(S)$).
\end{enumerate}
\end{lemma}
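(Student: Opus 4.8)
The plan is to exploit the triangle inequality~\ref{qm2} together with the contractivity of $\phi$, routing the distance from $y$ to $\phi(y)$ through the two intermediate points $x$ and $\phi(x)$. Since both parts of the statement concern the finiteness (resp.\ uniform boundedness) of $d(y,\phi(y))$, and since $x\sim y$ supplies finiteness of $d(x,y)$ and $d(y,x)$, a single inequality should handle everything.

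For part (a), I would apply~\ref{qm2} twice to obtain
$$d(y,\phi(y)) \leq d(y,x) + d(x,\phi(x)) + d(\phi(x),\phi(y)).$$
Here $d(y,x) < \infty$ because $x \sim y$; the middle term $d(x,\phi(x))$ is finite because $x \in \mathcal I_\phi(S)$; and by contractivity $d(\phi(x),\phi(y)) \leq d(x,y) < \infty$, again using $x \sim y$. Summing three finite quantities gives $d(y,\phi(y)) < \infty$, i.e.\ $y \in \mathcal I_\phi(S)$.

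For part (b), I would note that the very same chain yields the $\phi$-free bound
$$d(y,\phi(y)) \leq d(y,x) + d(x,\phi(x)) + d(x,y),$$
valid for every contractive endomorphism $\phi$. If $x \in \mathcal I_u(S)$, there is a constant $C>0$ with $d(x,\phi(x)) \leq C$ for all such $\phi$, so setting $C' = d(y,x) + C + d(x,y)$ gives $d(y,\phi(y)) \leq C'$ for every $\phi$, whence $y \in \mathcal I_u(S)$. The crucial point — and essentially the only thing requiring care — is that $C'$ does not depend on $\phi$: the two extra summands $d(y,x)$ and $d(x,y)$ are determined solely by the fixed pair $x,y$ and are finite precisely because $x\sim y$, so the \emph{uniform} bound for $x$ transfers to a uniform bound for $y$.

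Finally, the parenthetical claim is immediate once part (b) is in hand: since $\Inv(S) \subseteq \mathcal I_u(S)$, any $x \in \Inv(S)$ lies in $\mathcal I_u(S)$, and part (b) then delivers $y \in \mathcal I_u(S)$. I do not anticipate any genuine obstacle in this argument; the one subtlety worth flagging is exactly the $\phi$-independence of the comparison constant in (b), which is what makes uniform full inertness — and not merely $\phi$-inertness for a single $\phi$ — stable under the closeness relation $\sim$.
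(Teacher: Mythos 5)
Your proposal is correct and follows essentially the same route as the paper: the same two applications of~\ref{qm2} through the intermediate points $x$ and $\phi(x)$, the same use of contractivity to replace $d(\phi(x),\phi(y))$ by $d(x,y)$, and the same observation that the resulting bound $d(y,x)+C+d(x,y)$ is independent of $\phi$ in part (b). The treatment of the parenthetical claim via $\Inv(S)\subseteq\mathcal I_u(S)$ is also exactly what the paper intends.
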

\begin{proof}
(a) By~\ref{qm2} and the fact that $\phi$ is contractive, 
\[d(y,\phi(y))\leq d(y,x)+d(x,\phi(x))+d(\phi(x),\phi(y))\leq d(y,x)+d(x,\phi(x))+d(x,y)<\infty.\]

(b) Similarly, if for some $C$ one has $d(x,\phi(x))\leq C$ for all contractive endomorphisms $\phi$ of $S$, then using the fact that $\phi$ is contractive and~\ref{qm2}, we have 
\begin{align*}d(y,\phi(y))&\leq d(y,x)+d(x,\phi(x))+d(\phi(x),\phi(y))\leq  \\ &\leq d(y,x)+d(x,\phi(x))+d(x,y) \leq d(y,x)+d(x,y) + C.\qedhere\end{align*}
\end{proof}

Let us consider the set $\Inv(S)^\sim$ of all elements $x\in S$ which are close to some $y \in \Inv(S)$.  
In Lemma~\ref{Iu}(b) we showed that $\Inv(S)^\sim \subseteq \mathcal I_u(S)$. 
It is not clear whether one can invert this inclusion, namely: 

\begin{question}\label{ques1}
Let $(S,d)\in\Lqm$. If $y \in \mathcal I_u(S)$, does there exist $x \in \Inv(S)$ such that $x \sim y$? In other words, does the equality $\Inv(S)^\sim=\mathcal I_u(S)$ hold?
\end{question}

\begin{remark}
The above notions come from the case of groups, that is, in case $G$ is a group, one considers the lattice $\mathcal S(G)$ of all its subgroups with the generalized quasimetric discussed in Example~\ref{ex:ab}(b).

(a) Since fully invariant subgroups are usually hard to come by, one relaxes the property of fully invariance defining a subgroup $H$ of $G$ to be \emph{characteristic} in $G$ if $H$ is $\phi$-invariant for every $\phi\in\Aut(G)$. 

In this case one may also choose some other subgroup of $\Aut(G)$; in particular, if this subgroup of $\Aut(G)$ is $\mathrm{Inn}(G)$, then the subgroups $H$ of $G$ with $\phi(H)\leq H$ for every $\phi\in\mathrm{Inn}(G)$ are obviously the normal ones.

(b) More in general, for an \emph{operator group} or \emph{$\Omega$-group} $G$, that is, a group $G$ equipped with a family $\Omega$ of endomorphisms of $G$,  a subgroup $H$ of $G$ is called \emph{$\Omega$-invariant} or \emph{$\Omega$-admissible} if $\phi(H)\subseteq H$ for every $\phi\in\Omega$.  In particular, with $\Omega= \mathrm{Inn}(G)$ (respectively, $\Omega=\mathrm{End}(G)$, $\Omega=\mathrm{Aut}(G)$), the $\Omega$-admissible subgroups of $G$ are precisely the normal (respectively, the fully invariant, the characteristic) ones. 

(c) Analogously to the discussion in item (a), the subgroups $H$ of $G$ that are ``fully inert with respect to $\mathrm{Inn}(G)$'', that is, those $H$ that are $\phi$-inert for every $\phi\in\mathrm{Inn}(G)$ where studied under the name \emph{inert subgroups} in the nineties. Clearly, fully inert subgroups in the above sense are inert. This triggered the introduction of fully inert subgroups of abelian groups in \cite{DGSV1}.

(d) Bergman and Lenstra \cite{BL} introduced the notion of \emph{uniformly inert} subgroups of $G$. These are the subgroups $H$ of $G$ that are ``uniformly inert with respect to $\mathrm{Inn}(G)$'', that is, those $H$ such that for some constant $C>0$, $[H:\phi(H)\cap H]\leq C$ for every $\phi\in\mathrm{Inn}(G)$. Clearly, every uniformly fully inert subgroup is uniformly inert.

It is known from \cite[Theorem 3]{BL}  that a subgroup of a group $G$  is uniformly inert if and only if it is commensurable with a normal subgroup of $G$.  Nevertheless, Question~\ref{ques1} is still open; it was raised in \cite{DDR,DDS} in the case of a group $G$ and semilattice $S=\mathcal S(G)$ with the generalized quasimetric described in Example~\ref{ex:ab}(b). 
\end{remark}

\subsection{Trajectories and their properties}\label{traj-sec}

In this subsection we investigate the properties of the $\phi$-trajectories of $\phi$-inert elements, which turn out to be $\phi$-inert elements (see Lemma~\ref{wannabelogarithmic law}).

\begin{definition}
Let $((S,d),\phi)\in\Flow_{\Lqm}$ and $x\in S$. For $n\in\N_+$, the \emph{$n$-th $\phi$-trajectory} of $x$ is 
$T_n(\phi,x)=x+\phi(x)+\ldots+\phi^{n-1}(x)\in S$ and let $T_0(\phi,x)=0$.
\end{definition}

We simply write $T_n$ in place of $T_n(\phi,x)$, when $\phi$ and $x$ are clear from the context. 

\begin{remark}\label{rem:Tn+m}
Let $((S,d),\phi)\in\Flow_{\Lqm}$ and $x\in S$. For every $n,m,i\in\N_+$, $i\leq m$, it is straightforward to see that
$T_n(\phi^i,T_{m}(\phi,x))=T_{(n-1)i+m}(\phi,x).$
\end{remark}

The implication (a)$\Rightarrow$(c) in the next result is in Lemma~\ref{subsemilat}(a). 

\begin{proposition}\label{inert:corollary}
Let $((S,d),\phi)\in\Flow_{\Lqm}$ and $x\in S$. Then the following conditions are equivalent:
\begin{itemize}
\item [(a)] $x$ is $\phi$-inert (i.e., $d(x,\phi(x))< \infty$);
\item [(b)] $d(x,T_n(\phi,x))< \infty$ for every $n\in \N_+$;
\item [(c)] $x$ is $\phi^n$-inert (i.e., $d(x,\phi^n(x))< \infty$) for every $n\in \N_+$.
\end{itemize}
\end{proposition}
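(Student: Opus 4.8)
The plan is to establish the cycle of implications (a)$\Rightarrow$(c)$\Rightarrow$(b)$\Rightarrow$(a). The arrow (a)$\Rightarrow$(c) is already available from Lemma~\ref{subsemilat}(a): its proof supplies the key estimate $d(\phi^k(x),\phi^{k+1}(x))\leq d(x,\phi(x))$, and telescoping along~\ref{qm2} gives
\[
d(x,\phi^n(x))\leq\sum_{k=0}^{n-1}d(\phi^k(x),\phi^{k+1}(x))\leq n\,d(x,\phi(x)),
\]
so finiteness of $d(x,\phi(x))$ forces finiteness of every $d(x,\phi^n(x))$. Thus only the two remaining arrows need work, and both are short.

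For (c)$\Rightarrow$(b) the crucial device is idempotency of the semilattice operation: writing $x$ as the $n$-fold sum $x+x+\dots+x$, I would apply the subadditivity inequality \eqref{x+x'} repeatedly to the pair $\big(x+\dots+x,\ x+\phi(x)+\dots+\phi^{n-1}(x)\big)$, matching the $k$-th copy of $x$ with $\phi^{k}(x)$, to obtain
\[
d\big(x,T_n(\phi,x)\big)\leq\sum_{k=0}^{n-1}d\big(x,\phi^k(x)\big).
\]
Under hypothesis (c) each summand is finite, hence $d(x,T_n(\phi,x))<\infty$ for every $n$, which is precisely (b). Note that $x\leq T_n(\phi,x)$ always holds, so the reverse distance $d(T_n(\phi,x),x)$ vanishes and contributes nothing; all the content lives in the one-sided estimate above.

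For (b)$\Rightarrow$(a) it suffices to specialize to $n=2$. Indeed $T_2(\phi,x)=x+\phi(x)$, and the invariance axiom~\ref{qm3} yields $d(x,\phi(x))=d(x,x+\phi(x))=d\big(x,T_2(\phi,x)\big)$, which is finite by (b); hence $x$ is $\phi$-inert. I do not anticipate a genuine obstacle here: the only nonroutine idea is using idempotency to compare the single element $x$ with the whole trajectory $T_n(\phi,x)$ through \eqref{x+x'}, after which (b)$\Rightarrow$(a) is immediate from~\ref{qm3}. The one point to check carefully is that the iterated use of \eqref{x+x'} pairs the copies of $x$ with the $\phi^k(x)$ correctly, and that the conventions for the value $\infty$ cause no trouble in the intermediate sums.
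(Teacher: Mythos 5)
Your proof is correct, but it runs the equivalence around the opposite cycle from the paper and establishes the nontrivial arrows by different means. The paper proves (c)$\Rightarrow$(a) (trivial), (b)$\Rightarrow$(c) (from $\phi^{n-1}(x)\leq T_n(\phi,x)$ and monotonicity~\ref{2mon}), and (a)$\Rightarrow$(b) by induction on $n$: combining~\ref{qm3}, the triangle inequality~\ref{qm2} and contractivity of $\phi$ gives $d(x,T_n)\leq d(x,\phi(x))+d(x,T_{n-1})$, whence the linear bound $d(x,T_n)\leq (n-1)\,d(x,\phi(x))$. You instead prove (a)$\Rightarrow$(c) by telescoping, $d(x,\phi^n(x))\leq\sum_{k=0}^{n-1}d(\phi^k(x),\phi^{k+1}(x))\leq n\,d(x,\phi(x))$, then (c)$\Rightarrow$(b) by the idempotency trick $x=x+\dots+x$ together with iterated use of \eqref{x+x'}, and finally (b)$\Rightarrow$(a) directly from~\ref{qm3} with $n=2$, since $d(x,\phi(x))=d(x,x+\phi(x))=d(x,T_2(\phi,x))$ is in fact an equality (cleaner than a monotonicity estimate). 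All your steps are sound: \eqref{x+x'} extends to $n$-fold sums by associativity and induction, the first summand $d(x,\phi^0(x))$ vanishes, and the $\infty$-conventions cause no trouble since all sums involved have finite terms under the relevant hypotheses. What the paper's route buys is the sharper, linear-in-$n$ estimate $d(x,T_n)\leq(n-1)\,d(x,\phi(x))$, which immediately yields $\widetilde h(\phi,x)\leq d(x,\phi(x))$; chaining your two estimates gives only the quadratic bound $d(x,T_n)\leq\tfrac{n(n-1)}{2}\,d(x,\phi(x))$, which is still amply sufficient for the finiteness assertions of the proposition. What your route buys is a structurally transparent decomposition: the distance from $x$ to its trajectory is controlled summand-by-summand by the distances to the individual iterates, so (c)$\Rightarrow$(b) needs no induction over trajectories at all, and (b)$\Rightarrow$(a) becomes a one-line application of the invariance axiom.
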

\begin{proof}
(c)$\Rightarrow$(a) This is trivial.

(b)$\Rightarrow$(c) For $n\in\N_+$, \ref{2mon} gives
\(d(x,\phi^{n-1}(x)) \leq d(x,T_n)\).

(a)$\Rightarrow$(b) For $n\in\N_+$,
$$d(x,T_n)\leq d(x,\phi(x))+d(\phi(x),\phi(T_{n-1}))\leq d(x,\phi(x))+d(x,T_{n-1}).$$
By induction on gets $d(x,T_n)\leq (n-1)d(x,T_{n-1})$.
Hence, by \ref{2mon}, \(d(x,\phi^{n-1}(x))\leq d(x,T_n)\leq (n-1)d(x,\phi(x))<\infty\). 
\end{proof}

The above proposition implies in particular that $\mathcal I_\phi(S) = \bigcap_{n\in \N_+}\mathcal I_{\phi^n}(S).$

\begin{lemma}\label{alphan}
Let $((S,d),\phi)\in\Flow_{\Lqm}$ and let $x\in S$ be $\phi$-inert. Then, for every $n\in\N_+$,
$d(T_n(\phi,x),T_{n+1}(\phi,x))\leq d(T_{n-1}(\phi,x),T_n(\phi,x)).$
So the sequence $\{d(T_n,T_{n+1})\}_{n \in \N}$ of non-negative reals is decreasing.
\end{lemma}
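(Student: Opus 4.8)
The plan is to reduce everything to the single recursive identity $T_{k+1}(\phi,x) = x + \phi(T_k(\phi,x))$, valid for all $k\in\N$, and then to exploit the subadditivity property \eqref{x+x'} together with the contractivity of $\phi$. Once the recurrence is in hand, the inequality drops out in essentially one line, so the real work is in organizing these two ingredients correctly rather than in any genuine difficulty.

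First I would verify the recurrence. Writing $T_k(\phi,x) = x + \phi(x) + \cdots + \phi^{k-1}(x)$ and applying $\phi$ term by term gives $\phi(T_k) = \phi(x) + \cdots + \phi^k(x)$, whence $x + \phi(T_k) = T_{k+1}$; the base case $k=0$ reduces to the convention $T_0 = 0$, since $x + \phi(0) = x = T_1$. This identity lets me present both consecutive trajectories as sums sharing the common first summand $x$, namely $T_n = x + \phi(T_{n-1})$ and $T_{n+1} = x + \phi(T_n)$.

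Next I would apply \eqref{x+x'} (which is equivalent to \ref{qm3}) to the four elements $x,\ \phi(T_{n-1}),\ x,\ \phi(T_n)$ in order to obtain
$$d(T_n, T_{n+1}) = d\big(x+\phi(T_{n-1}),\, x+\phi(T_n)\big) \leq d(x,x) + d\big(\phi(T_{n-1}),\phi(T_n)\big).$$
Since $d(x,x)=0$ by \ref{qm1} and $\phi$ is contractive, the right-hand side is at most $d(T_{n-1},T_n)$, which is exactly the claimed inequality for every $n\in\N_+$. The monotonicity statement is then nothing more than this inequality read along $n$, so no separate argument is needed.

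I expect no serious obstacle; the only points deserving care are the clean verification of the recurrence (including the base case) and the observation that the terms involved are genuine reals and not $\infty$. For the latter I would note that $\phi$-inertness of $x$ gives $d(T_1,T_2) = d(x,\, x+\phi(x)) = d(x,\phi(x)) < \infty$ by \ref{qm3}, and then the chain of inequalities forces $d(T_n,T_{n+1}) \leq d(x,\phi(x)) < \infty$ for all $n\in\N_+$, so the sequence indeed consists of finite non-negative reals from $n=1$ onward.
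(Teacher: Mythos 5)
Your proof is correct and is essentially the paper's own argument: both rest on the recurrence $T_{n+1}(\phi,x)=x+\phi(T_n(\phi,x))$, reduce $d(T_n,T_{n+1})$ to the quantity $d(\phi(T_{n-1}),\phi(T_n))$, and conclude by contractivity of $\phi$. The only (cosmetic) difference lies in the middle step --- the paper rewrites $d(T_n,T_{n+1})=d(T_n,\phi(T_n))$ via~\ref{qm3} and then applies the monotonicity property~\ref{1mon}, whereas you invoke the equivalent subadditivity form \eqref{x+x'} together with $d(x,x)=0$ --- and your closing observation that the terms are finite from $n=1$ onward is correct and slightly more careful than the paper's wording.
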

\begin{proof}
Fix $n\in\N_+$. Since $\phi(T_{n-1})\leq x + \phi(T_{n-1}) = T_n$,~\ref{qm3} gives
\begin{align*}d(T_n,T_{n+1})&=d(x+\phi(T_{n-1}),x+\phi(T_{n-1})+\phi(T_n))= \\&=d(x+\phi(T_{n-1}),\phi(T_n)) \leq d(\phi(T_{n-1}),\phi(T_n))\leq d(T_{n-1},T_n).\qedhere\end{align*}
\end{proof}

The next result is useful in \S\ref{LLsec} about the so-called logarithmic law.

\begin{lemma}\label{wannabelogarithmic law}
Let $((S,d),\phi)\in\Flow_{\Lqm}$.
If $x$ is $\phi^k$-inert for some $k \in \N$, then $T_k(\phi,x)$ is $\phi$-inert and so $\phi^k$-inert. 

In particular, if $x$ is $\phi$-inert, then $T_n(\phi,x)$ is $\phi$-inert for all $n\in\N$.
\end{lemma}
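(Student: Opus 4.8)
The plan is to reduce the $\phi$-inertness of $T_k(\phi,x)$ directly to the hypothesis $d(x,\phi^k(x))<\infty$, and then to invoke Proposition~\ref{inert:corollary} for the remaining claims. The case $k=0$ is immediate, since $T_0(\phi,x)=0$ is trivially $\phi$-inert; so I assume $k\in\N_+$ and abbreviate $T_k=T_k(\phi,x)$.

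The starting observation is that $T_{k+1}$ admits two useful factorizations. On one hand $T_{k+1}=\sum_{i=0}^{k}\phi^i(x)=T_k+\phi(T_k)$, since $\phi(T_k)=\sum_{i=1}^{k}\phi^i(x)$; on the other hand $T_{k+1}=T_k+\phi^k(x)$, as $\phi^k(x)$ is precisely the single summand that $T_{k+1}$ adds to $T_k$. Applying~\ref{qm3} to each factorization gives
\[
d(T_k,\phi(T_k))=d(T_k,T_k+\phi(T_k))=d(T_k,T_{k+1})=d(T_k,T_k+\phi^k(x))=d(T_k,\phi^k(x)),
\]
so that the distance measuring the $\phi$-inertness of $T_k$ collapses to $d(T_k,\phi^k(x))$.

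I then bound this quantity using monotonicity in the first argument. Since $x\leq T_k$, property~\ref{1mon} yields $d(T_k,\phi^k(x))\leq d(x,\phi^k(x))$, and the right-hand side is finite because $x$ is $\phi^k$-inert. Hence $d(T_k,\phi(T_k))<\infty$, i.e., $T_k$ is $\phi$-inert. By the equivalence (a)$\Leftrightarrow$(c) in Proposition~\ref{inert:corollary}, $T_k$ is then $\phi^n$-inert for every $n\in\N_+$, in particular $\phi^k$-inert. For the final assertion, if $x$ is $\phi$-inert then (a)$\Rightarrow$(c) of the same proposition makes $x$ $\phi^n$-inert for every $n$, and applying the first part with $k=n$ shows that each $T_n(\phi,x)$ is $\phi$-inert.

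I expect the only genuinely non-routine step to be the identification of $\phi^k(x)$ as the single ``new'' summand distinguishing $T_{k+1}$ from $T_k$: it is this observation that lets me replace the entire tail $\phi(T_k)$ by the single element $\phi^k(x)$ carrying the inertness hypothesis, after which~\ref{qm3} and~\ref{1mon} do the rest. Everything else is a direct application of the axioms and of the already-established Proposition~\ref{inert:corollary}.
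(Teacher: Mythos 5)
Your proposal is correct and follows essentially the same route as the paper's own proof: the identity $T_k(\phi,x)+\phi(T_k(\phi,x))=T_{k+1}(\phi,x)=T_k(\phi,x)+\phi^k(x)$, the collapse of $d(T_k(\phi,x),\phi(T_k(\phi,x)))$ to $d(T_k(\phi,x),\phi^k(x))$ via (QM3), the bound by $d(x,\phi^k(x))$ via (M1), and the appeal to Proposition~\ref{inert:corollary} for the remaining claims. The only (harmless) additions are your explicit treatment of the trivial case $k=0$ and the spelled-out deduction of the final assertion, both of which the paper leaves implicit.
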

\begin{proof}
Let $k\in\N$ and $x$ be $\phi^k$-inert. Since $$T_k(\phi,x) + \phi(T_k(\phi,x)) = T_{k+1}(\phi,x) = T_k(\phi,x) + \phi^k(x),$$~\ref{qm3} implies that $$d ( T_k(\phi,x), \phi(T_k(\phi,x)) ) = d ( T_k(\phi,x), T_k(\phi,x) + \phi^k(x)) = d ( T_k(\phi,x), \phi^k(x)).$$ Then $d ( T_k(\phi,x), \phi(T_k(\phi,x)) ) \leq d ( x, \phi^k(x))$ by~\ref{1mon}, so $T_k(\phi,x)$ is $\phi$-inert. 

The remaining part is a consequence of Proposition~\ref{inert:corollary}.
%
%
\end{proof}


\begin{remark}
Let $((S,d),\phi)\in\Flow_{\Lqm}$. If $x,y\in S$ are $\phi$-inert and $n\in\N$,  then $d(T_n(\phi,x),T_n(\phi,y))\leq n d(x,y)$ by \eqref{x+x'}.
Consequently, for every $m\in\N$, since $T_{n+m}(\phi,x)=T_n(\phi,T_{m+1}(\phi,x))$ by Remark~\ref{rem:Tn+m}, $$d(T_n(\phi,x),T_{n+m}(\phi,x))\leq nd(x,T_{m+1}(\phi,x)).$$
\end{remark}

\begin{lemma}\label{TnTn+1}
Let $((S,d),\phi)\in\Flow_{\barLqm}$ and $x\in S$. Then, for every $n,m\in\N$, $d(x,T_{n+m})=d(x,T_n)+d(T_n,T_{n+m}).$
\end{lemma}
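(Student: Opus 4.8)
The plan is to reduce the identity to a single application of order-convexity~\ref{x+y'}, which is precisely an additivity statement for $d$ along a chain $u\leq v\leq w$. The only genuine work is to exhibit $x$, $T_n$ and $T_{n+m}$ as such a chain; once that is done, the lemma is immediate and, notably, requires no inertness hypothesis on $x$.

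First I would record the two order relations. Since $+$ is the join operation of the semilattice $(S,\leq)$, every summand of a finite join lies below the join; in particular $x\leq x+\phi(x)+\dots+\phi^{n-1}(x)=T_n$ for $n\geq 1$. Likewise, writing
\[
T_{n+m}=T_n+\bigl(\phi^{n}(x)+\dots+\phi^{n+m-1}(x)\bigr),
\]
we obtain $T_n\leq T_{n+m}$ for all $n,m$ (the parenthesized join being empty, hence $0$, when $m=0$). Thus, for $n\geq 1$, we have the chain $x\leq T_n\leq T_{n+m}$, and applying~\ref{x+y'} with $(u,v,w)=(x,T_n,T_{n+m})$ yields exactly
\[
d(x,T_{n+m})=d(x,T_n)+d(T_n,T_{n+m}).
\]
This works verbatim even when some of the three distances are infinite, by the convention $r+\infty=\infty$, so the statement holds in full generality on $\barLqm$ without assuming $x$ to be $\phi$-inert.

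The degenerate cases deserve a word and are in fact the only point requiring care. For $m=0$ the claim is trivial, since $T_{n+m}=T_n$ and $d(T_n,T_n)=0$. For $n=0$, however, the chain breaks down: $T_0=0$ is the bottom element, so $x\leq T_0$ forces $x=0$, and for $x\neq 0$ the asserted equality reads $d(x,T_m)=d(0,T_m)$, which need not hold (indeed~\ref{x+y'} applied to $0\leq x\leq T_m$ gives $d(0,T_m)=d(0,x)+d(x,T_m)$, so equality fails unless $d(0,x)=0$). I would therefore read the statement for $n\in\N_+$, the main range, which is settled by the order-convexity argument above; if $n=0$ is to be admitted one restricts to $x$ with $d(0,x)=0$.
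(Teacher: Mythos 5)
Your proof is correct and is essentially the paper's own proof, which consists of the single line ``Since $x\leq T_n\leq T_{n+m}$, the assertion follows from~\ref{x+y'}.'' Your caveat about $n=0$ is a genuine point that the paper's one-line proof silently skips: when $n=0$ and $x\neq 0$ the chain breaks ($x\not\leq T_0=0$), and the asserted identity $d(x,T_m)=d(x,0)+d(0,T_m)=d(0,T_m)$ genuinely fails (e.g.\ in $\mathcal S^\vee(G)$ with $G$ finite nontrivial, $\phi=\mathrm{id}$, $x=G$: the left-hand side is $0$ while the right-hand side is $\log\card{G}>0$), so the lemma should indeed be read for $n\in\N_+$ --- a harmless restriction, since in its only application (the proposition that follows) one may always choose $n_0\geq 1$.
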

\begin{proof} 
Since $x\leq T_n\leq T_{n+m}$, the assertion follows from~\ref{x+y'}.
\end{proof}

\subsection{The intrinsic semilattice entropy}\label{intrent-sec}

The next result enables us to introduce the fundamental notion of this paper, namely the intrinsic semilattice entropy of a contractive endomorphism $\phi$ of  an object $(S,d)$ of $\Lqm$.

\begin{theorem}\label{existence:limit}
Let $((S,d),\phi)\in\Flow_{\Lqm}$. The following limit exists for every $x\in \mathcal I_\phi(S)$:
$$\widetilde h(\phi,x) = \lim_{n\to\infty} \frac{d(x,T_n(\phi,x))}{n}.$$
\end{theorem}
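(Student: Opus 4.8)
The plan is to reduce the existence of the limit to Fekete's subadditivity lemma applied to the sequence $a_n:=d(x,T_n(\phi,x))$. First I would record the elementary facts $a_0=d(x,0)=0$ and $a_1=d(x,x)=0$, together with the crucial finiteness $a_n<\infty$ for every $n$: since $x$ is $\phi$-inert, this is exactly the implication (a)$\Rightarrow$(b) of Proposition~\ref{inert:corollary}. Thus $\{a_n\}$ is a sequence of finite non-negative reals, and everything reduces to showing that $a_n/n$ converges.

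The heart of the argument is a monotonicity statement for the shifted distances $g_m(n):=d(T_n,T_{n+m})$: for each fixed $m$, the sequence $n\mapsto g_m(n)$ is non-increasing. To prove $g_m(n)\le g_m(n-1)$ for $n\ge1$ I would use the elementary identity $T_{n+m}=T_n+\phi^n(T_m)$, so that~\ref{qm3} gives $g_m(n)=d(T_n,\phi^n(T_m))$; then, writing $T_n=x+\phi(T_{n-1})$ and $\phi^n(T_m)=\phi(\phi^{n-1}(T_m))$, I apply~\ref{1mon} (using $\phi(T_{n-1})\le T_n$), followed by contractivity of $\phi$ and~\ref{qm3} once more, to arrive at $d(T_n,\phi^n(T_m))\le d(T_{n-1},\phi^{n-1}(T_m))=g_m(n-1)$. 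This is precisely the telescoping mechanism of Lemma~\ref{alphan}, now carried out with an arbitrary gap $m$ in place of $1$.

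With this monotonicity available, the triangle inequality~\ref{qm2} yields $a_{n+m}\le a_n+g_m(n)$, and since $g_m(n)\le g_m(1)=d(x,T_{m+1})=a_{m+1}$ for every $n\ge1$, I obtain the near-subadditive estimate $a_{n+m}\le a_n+a_{m+1}$ for all $n,m\ge1$. Setting $b_j:=a_{j+1}$ and substituting $n=i+1$, $m=j$ converts this into genuine subadditivity $b_{i+j}\le b_i+b_j$, the boundary cases $i=0$ or $j=0$ holding trivially because $b_0=a_1=0$. Fekete's lemma then guarantees that $b_j/j$ converges to $\inf_{j\ge1}b_j/j\in\R_{\geq0}$; since $a_n/n=\tfrac{b_{n-1}}{n-1}\cdot\tfrac{n-1}{n}$ for $n\ge2$, the sequence $a_n/n$ converges to the same value, which is the asserted limit $\widetilde h(\phi,x)$.

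The main obstacle, and the reason one cannot simply invoke the cleaner telescoping $a_n=\sum_{i<n}d(T_i,T_{i+1})$, is that order-convexity~\ref{x+y'} is \emph{not} assumed here: over $\Lqm$ one only has the subadditive bound $d(x,T_n)\le\sum_{i<n}d(T_i,T_{i+1})$ along the chain $x\le T_1\le\cdots\le T_n$, which controls $\limsup a_n/n$ but gives no matching lower bound on $\liminf a_n/n$. The monotonicity of $g_m(n)$ is exactly the device that repairs this gap: it upgrades the one-step decreasing-increment estimate of Lemma~\ref{alphan} to the full subadditive inequality required by Fekete, and thereby secures existence of the limit with no recourse to~\ref{x+y'}.
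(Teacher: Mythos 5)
Your proof is correct and takes essentially the same route as the paper's: both reduce the theorem to Fekete's lemma by establishing subadditivity of the shifted sequence $c_n=d(x,T_{n+1}(\phi,x))$, i.e.\ the inequality $a_{n+m+1}\leq a_{n+1}+a_{m+1}$. Your iterated one-step monotonicity of $g_m(n)=d(T_n,T_{n+m})$ is just a repackaging of the paper's single chain of estimates, which uses the same ingredients in the same way (the identity $T_{n+m}=T_n+\phi^n(T_m)$, then~\ref{qm3},~\ref{1mon}, and contractivity of $\phi$) to bound $d(T_{n+1},T_{m+n+1})$ by $c_m$.
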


This important result is a consequence of the following proposition and Fekete Lemma (see \cite{Fek}).

\begin{proposition}\label{prop:subadd}
Let $((S,d),\phi)\in\Flow_{\Lqm}$ and $x\in \mathcal I_\phi(S)$. 
Then  $\{d(x,T_{n+1}(\phi,x))\}_{n\in\N}$ is subadditive.
\end{proposition}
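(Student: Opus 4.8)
Recall that subadditivity of the sequence $\{d(x,T_{n+1}(\phi,x))\}_{n\in\N}$ means precisely that
$$d(x,T_{n+m+1}(\phi,x))\le d(x,T_{n+1}(\phi,x))+d(x,T_{m+1}(\phi,x))\quad\text{for all }n,m\in\N;$$
since $x\in\mathcal I_\phi(S)$, Proposition~\ref{inert:corollary} guarantees that all these quantities are finite, so we are genuinely dealing with a sequence of non-negative reals. The plan is to exploit the additive decomposition of the trajectories along powers of $\phi$. Writing $T_n=T_n(\phi,x)$ for brevity, the crucial observation is that
$$T_{n+m+1}=T_{n+1}+\phi^{n+1}(T_m),$$
which follows immediately from $T_{n+1}=x+\phi(x)+\dots+\phi^{n}(x)$ and $\phi^{n+1}(T_m)=\phi^{n+1}(x)+\dots+\phi^{n+m}(x)$.

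First I would apply the triangle inequality~\ref{qm2} to split $d(x,T_{n+m+1})\le d(x,T_{n+1})+d(T_{n+1},T_{n+m+1})$, so that it suffices to establish the bound $d(T_{n+1},T_{n+m+1})\le d(x,T_{m+1})$ on the second summand. Using the decomposition above together with the invariance property~\ref{qm3} (applied with $T_{n+1}$ in place of $x$ and $\phi^{n+1}(T_m)$ in place of $y$), I would rewrite
$$d(T_{n+1},T_{n+m+1})=d\bigl(T_{n+1},T_{n+1}+\phi^{n+1}(T_m)\bigr)=d\bigl(T_{n+1},\phi^{n+1}(T_m)\bigr).$$

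The core of the argument is then a short chain of reductions collapsing this quantity to $d(x,T_{m+1})$. Since $\phi^{n}(x)\le T_{n+1}$, monotonicity~\ref{1mon} in the first variable gives $d(T_{n+1},\phi^{n+1}(T_m))\le d(\phi^{n}(x),\phi^{n+1}(T_m))$; writing the right-hand side as $d(\phi^{n}(x),\phi^{n}(\phi(T_m)))$ and using that $\phi^{n}$ is contractive yields $d(\phi^{n}(x),\phi^{n+1}(T_m))\le d(x,\phi(T_m))$; and finally, since $T_{m+1}=x+\phi(T_m)$, the invariance property~\ref{qm3} gives $d(x,\phi(T_m))=d(x,T_{m+1})$. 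Chaining these three inequalities proves $d(T_{n+1},T_{n+m+1})\le d(x,T_{m+1})$, whence the desired subadditivity. The step I expect to require the most care is choosing the correct intermediate element $\phi^{n}(x)\le T_{n+1}$, so that the combination of monotonicity, contractivity of $\phi^{n}$, and invariance telescopes cleanly; once this is set up, the remainder is routine bookkeeping with the trajectory formula.
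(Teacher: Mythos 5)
Your proof is correct and follows essentially the same argument as the paper's: the same splitting of $d(x,T_{n+m+1})$ via~\ref{qm2}, the same decomposition $T_{n+m+1}=T_{n+1}+\phi^{n+1}(T_m)$ collapsed by~\ref{qm3}, and the same chain using~\ref{1mon} with $\phi^n(x)\leq T_{n+1}$ followed by contractivity of $\phi^n$. The only (immaterial) difference is the final step, where you obtain the equality $d(x,\phi(T_m))=d(x,T_{m+1})$ from~\ref{qm3}, while the paper settles for the inequality $d(x,\phi(T_m))\leq d(x,T_{m+1})$ from~\ref{2mon}; both are valid.
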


\begin{proof} For $n \in \N$ let $c_n= d(x,T_{n+1}(\phi,x))$. We have to prove that $c_{m+n}\leq c_m + c_n$ for every $m,n\in\N$. One has $$c_{m+n}=d(x,T_{m+n+1}(\phi,x))\leq c_n+d(T_{n+1}(\phi,x),T_{m+n+1}(\phi,x))$$ by~\ref{qm2}. Hence, to conclude that $c_{m+n}\leq c_m + c_n$, it suffices to compute
\begin{align*}
 d(T_{n+1}(\phi,x),T_{m+n+1}(\phi,x))&=d(T_{n+1}(\phi,x),T_{n+1}(\phi,x)+\phi^{n+1}(T_m(\phi,x))) \\
                                               &=d(T_{n+1}(\phi,x),\phi^{n+1}(T_m(\phi,x))) \\
                                               &\leq d(\phi^n(x),\phi^{n+1}(T_{m}(\phi,x))) \\
                                                &\leq d(x,\phi (T_{m}(\phi,x)))\\
                                                &\leq d(x,T_{m+1}(\phi,x)) = c_m,                                               
\end{align*}
where the first equality holds by definition, the second by~\ref{qm3}, the first inequality by~\ref{1mon} since $\phi^n(x)\leq  T_{n+1}(\phi,x)$, the second inequality because $\phi$ is contractive, and the last inequality by~\ref{2mon} since $\phi(T_m(\phi,x))\leq  T_{m+1}(\phi,x)$.
\end{proof}

Theorem~\ref{existence:limit} allows us to give the main definition of this paper.

\begin{definition}
Let $((S,d),\phi)\in\Flow_{\Lqm}$.  The \emph{intrinsic semilattice entropy} of $\phi$ with respect to $x\in \mathcal I_\phi(S)$ is the value $\widetilde h(\phi,x)$ introduced in Theorem~\ref{existence:limit}. 

The \emph{intrinsic semilattice entropy} of $\phi$ is $\widetilde h(\phi) = \sup\{\widetilde h(\phi,x)\mid x\in\mathcal I_\phi(S)\}$.
\end{definition}

Due to Lemma~\ref{TnTn+1} we see now that a stronger result with respect to Theorem~\ref{existence:limit} holds for flows in $\barLqm$. 

\begin{proposition}
Let $((S,d),\phi)\in\Flow_{\barLqm}$ and the value set $d(S\times S)$ be a well-ordered subset of the range. 
If $x\in S$ is $\phi$-inert, then 
$$\widetilde h(\phi,x) =\inf_{n\in \N}d(T_n(\phi,x),T_{n+1}(\phi,x))\in\R_{\geq0}.$$
\end{proposition}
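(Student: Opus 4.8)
The plan is to rewrite the numerator $d(x,T_n(\phi,x))$ appearing in the definition of $\widetilde h(\phi,x)$ as a telescoping sum of the consecutive gaps $a_n:=d(T_n(\phi,x),T_{n+1}(\phi,x))$, and then to exploit the well-ordering hypothesis to force the monotone sequence $\{a_n\}$ to become eventually constant; the Cesàro-type limit defining $\widetilde h(\phi,x)$ then collapses to that constant value.

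First I would record the properties of the gaps. Since $T_1(\phi,x)=x$ and $T_2(\phi,x)=x+\phi(x)$, axiom~\ref{qm3} gives $a_1=d(x,x+\phi(x))=d(x,\phi(x))$, which is finite because $x$ is $\phi$-inert. By Lemma~\ref{alphan} the sequence $\{a_n\}$ is decreasing, so $0\leq a_n\leq a_1<\infty$ for all $n\geq1$ and $\ell:=\inf_{n\geq1}a_n=\lim_{n}a_n\in\R_{\geq0}$. The term $a_0=d(0,x)$ may be infinite, but $a_0\geq a_1$ by Lemma~\ref{alphan}, so it does not affect the infimum; hence $\inf_{n\in\N}a_n=\ell$.

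Next, because $((S,d),\phi)\in\Flow_{\barLqm}$, I would apply order-convexity~\ref{x+y'} (equivalently, iterate Lemma~\ref{TnTn+1} with $m=1$) along the chain $x=T_1\leq T_2\leq\cdots\leq T_n$ to obtain the telescoping identity
$$d(x,T_n(\phi,x))=\sum_{k=1}^{n-1}a_k,$$
so that $d(x,T_n(\phi,x))/n=\frac1n\sum_{k=1}^{n-1}a_k$. By Theorem~\ref{existence:limit} the left-hand side tends to $\widetilde h(\phi,x)$, so it remains to identify this average with $\ell$. Here the hypothesis enters decisively: the gaps $a_n$ lie in the well-ordered set $d(S\times S)$, and a well-ordered subset of $\R_{\geq0}\cup\{\infty\}$ admits no infinite strictly decreasing sequence; therefore the decreasing sequence $\{a_n\}_{n\geq1}$ stabilizes, i.e.\ there is $N$ with $a_n=\ell$ for all $n\geq N$. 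For $n>N$ I would split
$$\frac1n\sum_{k=1}^{n-1}a_k=\frac1n\sum_{k=1}^{N-1}a_k+\frac{n-N}{n}\,\ell,$$
and let $n\to\infty$: the first summand vanishes and the second tends to $\ell$, whence $\widetilde h(\phi,x)=\ell=\inf_{n\in\N}a_n\in\R_{\geq0}$.

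The only genuinely delicate point is the role of the well-ordering. Without it the sequence $\{a_n\}$ still converges, being monotone and bounded, and the classical Cesàro theorem would already yield $\widetilde h(\phi,x)=\ell$; what well-ordering buys us is the finitary stabilization above, which both avoids any appeal to real analysis and guarantees that the infimum is in fact attained. The other point requiring care is the telescoping identity, which rests on the stronger axiom~\ref{x+y'}, valid in $\barLqm$ but not in $\Lqm$, and this is exactly why the statement is restricted to $\Flow_{\barLqm}$.
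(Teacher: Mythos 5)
Your proof is correct and takes essentially the same route as the paper's: Lemma~\ref{alphan} together with the well-ordering hypothesis forces the decreasing gap sequence $d(T_n,T_{n+1})$ to stabilize at its infimum $\ell$, and Lemma~\ref{TnTn+1} (order-convexity) turns $d(x,T_n)$ into a telescoped sum whose averaged limit is $\ell$. The only cosmetic difference is that you telescope fully into $\sum_{k=1}^{n-1}a_k$ and split the sum at $N$, whereas the paper writes $d(x,T_{n_0+m})=d(x,T_{n_0})+m\alpha$ directly; your side remark that in $\barLqm$ the Ces\`aro theorem would yield the equality even without well-ordering is also sound.
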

\begin{proof}
By Lemma~\ref{alphan} the sequence $\{d(T_n,T_{n+1})\}_{n\in\N}$ is decreasing, so it stabilizes, according to our hypothesis. Let  $\alpha=\inf\{d(T_n,T_{n+1})\mid n\in \N\}$.
There exists $n_0\in\N$  such that $d(T_n,T_{n+1})=\alpha$ for every $n\in\N$ with $n\geq n_0$.
By Lemma~\ref{TnTn+1}, $d(x,T_{n_0+m})=d(x,T_{n_0})+m\alpha$ for every $m\in\N$; therefore, 
\[\widetilde h(\phi,x)= \lim_{m\to\infty} \frac{d(x,T_{n_0+m}(\phi,x))}{n_0+m}= \lim_{m\to\infty} \frac{d(x,T_{n_0})+m\alpha}{n_0+m}=\alpha.\qedhere\]
\end{proof}

\section{Basic properties of the intrinsic semilattice entropy}\label{entropy-sec}

In this section we investigate several properties of the map $\hti\colon \Flow_{\Lqm}\to \R_{\geq0}\cup\{\infty\}$, where we let $\hti(\phi)=\hti(S,\phi)$ for every $(S,\phi)\in\Flow_{\Lqm}$.

\subsection{The intrinsic semilattice entropy is an invariant}\label{basic-sec}

We start by showing that the identity map has zero intrinsic semilattice entropy.

\begin{example}
If $S\in{\Lqm}$, then $\widetilde h(id_S)=0$.
Indeed, every $x\in S$ is $id_S$-inert, and $T_n (id_S,x) = x$ for every $n\in\N$, so $\widetilde h(id_S,x)=0$.
\end{example}


The condition needed in item~\ref{ei1} of the next result seems to be different from the surjectivity of $\alpha:S_1 \to S_2$.

\begin{proposition}\label{entropies:inequalities}
Let $\alpha:((S_1,d_1),\phi_1) \to ((S_2,d_2),\phi_2)$ be a morphism in $\Flow_{\Lqm}$.
Then $\alpha ( \mathcal I_{\phi_1}(S_1) ) \subseteq \mathcal I_{\phi_2}(S_2)$ and $T_n(\phi_2,\alpha (x)) = \alpha ( T_n(\phi_1, x) )$ for every $x\in S_1$ and  $n \in \N$. Moreover:
\begin{enumerate}[(a)]
\item\label{ei1} if $\alpha ( \mathcal I_{\phi_1}(S_1) ) = \mathcal I_{\phi_2}(S_2)$, then $\widetilde h(\phi_2) \leq \widetilde h(\phi_1)$;
\item\label{ei2} if $\alpha$ is an injective isometry, then $\widetilde h(\phi_2) \geq \widetilde h(\phi_1)$.
\end{enumerate}
\end{proposition}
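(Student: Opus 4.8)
The plan is to first establish the two preliminary assertions that hold for an arbitrary morphism of flows, and then use them to derive each of the two entropy inequalities. For the preliminary claims, I would start with the trajectory identity $T_n(\phi_2,\alpha(x)) = \alpha(T_n(\phi_1,x))$. Since $\alpha$ is a semilattice homomorphism intertwining the endomorphisms (i.e.\ $\alpha\circ\phi_1 = \phi_2\circ\alpha$, hence $\alpha\circ\phi_1^i = \phi_2^i\circ\alpha$ for all $i$), I would expand $T_n(\phi_1,x) = x + \phi_1(x) + \cdots + \phi_1^{n-1}(x)$, apply $\alpha$, and use that $\alpha$ preserves $+$ to get $\alpha(T_n(\phi_1,x)) = \alpha(x) + \phi_2(\alpha(x)) + \cdots + \phi_2^{n-1}(\alpha(x)) = T_n(\phi_2,\alpha(x))$; this is a short induction or direct computation. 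For the inclusion $\alpha(\mathcal I_{\phi_1}(S_1)) \subseteq \mathcal I_{\phi_2}(S_2)$, I would take $x$ that is $\phi_1$-inert and estimate, using contractivity of $\alpha$, that $d_2(\alpha(x),\phi_2(\alpha(x))) = d_2(\alpha(x),\alpha(\phi_1(x))) \leq d_1(x,\phi_1(x)) < \infty$, so $\alpha(x)$ is $\phi_2$-inert.

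Next I would compare the entropy contributions of corresponding elements. Using contractivity again together with the trajectory identity, for any $x\in\mathcal I_{\phi_1}(S_1)$ one has
\[
d_2(\alpha(x),T_n(\phi_2,\alpha(x))) = d_2(\alpha(x),\alpha(T_n(\phi_1,x))) \leq d_1(x,T_n(\phi_1,x)).
\]
Dividing by $n$ and taking the limit (which exists by Theorem~\ref{existence:limit}) gives the pointwise inequality $\widetilde h(\phi_2,\alpha(x)) \leq \widetilde h(\phi_1,x)$ for every $\phi_1$-inert $x$.

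For part~\ref{ei1}, I would take the supremum. Every $\phi_2$-inert element $y\in\mathcal I_{\phi_2}(S_2)$ is of the form $y = \alpha(x)$ for some $x\in\mathcal I_{\phi_1}(S_1)$ precisely because of the hypothesis $\alpha(\mathcal I_{\phi_1}(S_1)) = \mathcal I_{\phi_2}(S_2)$, so
\[
\widetilde h(\phi_2,y) = \widetilde h(\phi_2,\alpha(x)) \leq \widetilde h(\phi_1,x) \leq \widetilde h(\phi_1),
\]
and taking the supremum over $y$ yields $\widetilde h(\phi_2) \leq \widetilde h(\phi_1)$. This is where the surjectivity-like hypothesis is essential: without it, a $\phi_2$-inert element need not be hit by $\alpha$, and the supremum over $S_2$ could exceed the supremum over $S_1$. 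For part~\ref{ei2}, when $\alpha$ is an injective isometry the pointwise inequality becomes an equality, $d_2(\alpha(x),T_n(\phi_2,\alpha(x))) = d_1(x,T_n(\phi_1,x))$, so $\widetilde h(\phi_2,\alpha(x)) = \widetilde h(\phi_1,x)$ for every $\phi_1$-inert $x$; since $\alpha(\mathcal I_{\phi_1}(S_1)) \subseteq \mathcal I_{\phi_2}(S_2)$, each value $\widetilde h(\phi_1,x)$ is realized as $\widetilde h(\phi_2,\alpha(x))$, whence $\widetilde h(\phi_1) = \sup_x \widetilde h(\phi_1,x) \leq \widetilde h(\phi_2)$.

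The main subtlety, rather than the routine computations, is the directional interplay between the two hypotheses and the two inequalities: in~\ref{ei1} the assumption forces surjectivity onto the inert elements so that the supremum defining $\widetilde h(\phi_2)$ ranges over images of $S_1$, whereas in~\ref{ei2} the isometry assumption upgrades the contractive inequality to an equality but only yields the reverse bound because $\alpha$ need not be surjective onto $\mathcal I_{\phi_2}(S_2)$. Keeping track of which direction each hypothesis pushes the supremum is the one place where care is needed; the underlying estimates are all immediate from contractivity, the trajectory identity, and the monotonicity properties~\ref{1mon}--\ref{2mon}.
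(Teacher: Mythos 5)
Your proof is correct and follows essentially the same route as the paper's: the trajectory identity via the intertwining relation, the contractivity estimate $d_2(\alpha(x),\phi_2(\alpha(x)))\leq d_1(x,\phi_1(x))$ for inertness, the pointwise bound $\widetilde h(\phi_2,\alpha(x))\leq\widetilde h(\phi_1,x)$ (an equality under the isometry hypothesis), and then the two supremum arguments driven by the respective hypotheses in (a) and (b). Your closing remark about which hypothesis controls which direction of the supremum comparison matches the paper's reasoning exactly, so there is nothing to add.
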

\begin{proof}
Since $\alpha$ is  a contractive semilattice homomorphism such that $ \alpha \phi_1 = \phi_2 \alpha$, one has
$d_2 ( \alpha(x), \phi_2 (\alpha(x))) =  d_2 ( \alpha(x), \alpha (\phi_1(x))) \leq d_1(x, \phi_1(x) )<\infty.$ Then  $\alpha(x)\in S_2$ is $\phi_2$-inert whenever $x \in S_1$ is $\phi_1$-inert. 
 
If $x\in S_1$ and  $n \in \N$, then 
\begin{align*}
T_n(\phi_2,\alpha (x)) &= \alpha (x) + \phi_2 \alpha (x) +\cdots +\phi_2^{n-1 }\alpha (x)= \\ &= \alpha (x) + \alpha  \phi_1 (x) +\cdots +\alpha  \phi_1^{n-1} (x)
=\alpha ( T_n(\phi_1, x) ).
\end{align*}
(a) Let $y\in \mathcal I_{\phi_2}(S_2)$, and let $x\in \mathcal I_{\phi_1}(S_1)$ be such that $y = \alpha (x)$.
Using the first part of the proof, we obtain
$$\widetilde h(\phi_2, y ) =\lim_{n\to\infty} \frac{d_2(\alpha(x),\alpha ( T_n(\phi_1, x) ))}{n} \leq \lim_{n\to\infty} \frac{d_1(x, T_n(\phi_1, x) )}{n} = \widetilde h(\phi_1, x ).$$
Since $\widetilde h(\phi_1, x )\leq \widetilde h(\phi_1)$, taking the supremum over $y\in \mathcal I_{\phi_2}(S_2)$ in the above inequality we get $\widetilde h(\phi_2) \leq \widetilde h(\phi_1)$.

(b) Assume that $\alpha$ is injective and $d_2 (\alpha(x), \alpha (y)) = d_1(x, y )$ for every $x,y \in S_1$. For a $\phi_1$-inert element $x \in S_1$, we proved already that $\alpha(x)\in S_2$ is $\phi_2$-inert. Moreover, 
$$
\widetilde h(\phi_2, \alpha(x) ) =\lim_{n\to\infty} \frac{d_2(\alpha(x),\alpha ( T_n(\phi_1, x) ))}{n} = 
\lim_{n\to\infty} \frac{d_1(x, T_n(\phi_1, x) )}{n} = \widetilde h(\phi_1, x ).
$$
Then $\widetilde h(\phi_2) \geq \widetilde h(\phi_2, \alpha(x) ) = \widetilde h(\phi_1, x )$ for every $\phi_1$-inert element $x$, so $\widetilde h(\phi_2) \geq  \widetilde h(\phi_1)$.\qedhere 
\end{proof}

When $\alpha:((S_1,d_1),\phi_1) \to ((S_2,d_2),\phi_2)$ is an isomorphism in $\Flow_{\Lqm}$, it satisfies all the hypotheses in Proposition~\ref{entropies:inequalities}(a,b). Moreover, $\phi_2$ coincides with $\alpha \phi_1 \alpha^{-1}$, so $\widetilde h( \alpha \phi_1 \alpha^{-1} ) = \widetilde h(\phi_1)$ in this case.

\begin{corollary}[Invariance under conjugation]\label{inv:conj}
Let $\alpha:((S_1,d_1),\phi_1) \to ((S_2,d_2),\phi_2)$ be an isomorphism in $\Flow_{\Lqm}$.
Then $\alpha ( \mathcal I_{\phi_1}(S_1) ) = \mathcal I_{\phi_2}(S_2)$ and $\widetilde h(\phi_2) = \widetilde h(\phi_1)$.
\end{corollary}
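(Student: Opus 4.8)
The plan is to derive the corollary directly from Proposition~\ref{entropies:inequalities}, applied both to $\alpha$ and to its inverse $\alpha^{-1}$. The first thing I would observe is that an isomorphism in $\Flow_{\Lqm}$ is automatically an injective isometry. Indeed, since $\alpha$ is contractive, $d_2(\alpha(x),\alpha(y))\leq d_1(x,y)$ for all $x,y\in S_1$; applying contractivity of $\alpha^{-1}$ to the pair $\alpha(x),\alpha(y)$ gives $d_1(x,y)=d_1(\alpha^{-1}(\alpha(x)),\alpha^{-1}(\alpha(y)))\leq d_2(\alpha(x),\alpha(y))$. Combining the two inequalities yields $d_2(\alpha(x),\alpha(y))=d_1(x,y)$, so $\alpha$ is an isometry, and it is injective being a bijection.

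Next I would establish the equality $\alpha(\mathcal I_{\phi_1}(S_1))=\mathcal I_{\phi_2}(S_2)$. Proposition~\ref{entropies:inequalities} already gives the inclusion $\alpha(\mathcal I_{\phi_1}(S_1))\subseteq\mathcal I_{\phi_2}(S_2)$. Now $\alpha^{-1}:((S_2,d_2),\phi_2)\to((S_1,d_1),\phi_1)$ is again a morphism (indeed an isomorphism) in $\Flow_{\Lqm}$: the flow condition $\alpha\phi_1=\phi_2\alpha$ rearranges to $\phi_1\alpha^{-1}=\alpha^{-1}\phi_2$, and $\alpha^{-1}$ is contractive by hypothesis. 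Hence the same proposition applied to $\alpha^{-1}$ yields $\alpha^{-1}(\mathcal I_{\phi_2}(S_2))\subseteq\mathcal I_{\phi_1}(S_1)$, that is, $\mathcal I_{\phi_2}(S_2)\subseteq\alpha(\mathcal I_{\phi_1}(S_1))$. The two inclusions together give the desired equality.

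With the equality of inert sets in hand, the hypothesis of Proposition~\ref{entropies:inequalities}\eqref{ei1} is satisfied, so $\widetilde h(\phi_2)\leq\widetilde h(\phi_1)$. On the other hand, the first step showed that $\alpha$ is an injective isometry, which is exactly the hypothesis of Proposition~\ref{entropies:inequalities}\eqref{ei2}, yielding $\widetilde h(\phi_2)\geq\widetilde h(\phi_1)$. Combining the two inequalities gives $\widetilde h(\phi_2)=\widetilde h(\phi_1)$.

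Since every step is an immediate application of the already-proved proposition, there is essentially no genuine obstacle here; the two points requiring a moment's care are the verification that an isomorphism of flows is an isometry (so that part~\eqref{ei2} applies) and the passage to $\alpha^{-1}$ that upgrades the inclusion of inert subsemilattices to an equality (so that part~\eqref{ei1} applies). Everything else is formal.
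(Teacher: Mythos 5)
Your proof is correct and follows the same route as the paper, which simply asserts that an isomorphism in $\Flow_{\Lqm}$ satisfies the hypotheses of both parts of Proposition~\ref{entropies:inequalities}; you supply the verification (that $\alpha$ is an injective isometry because $\alpha^{-1}$ is also contractive, and that applying the proposition to $\alpha^{-1}$ upgrades the inclusion of inert sets to the equality needed for part~\eqref{ei1}), which is exactly the argument the paper leaves implicit.
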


This shows that $\hti\colon \Flow_{\Lqm}\to \R_{\geq0}\cup\{\infty\}$ is an invariant of $\Flow_{\Lqm}$.

\subsection{Towards the logarithmic law}\label{LLsec}

In the following results we compare the intrinsic semilattice entropy $\widetilde h(\phi)$ of a flow $((S,d),\phi)$ in $\Flow_{\Lqm}$, with the intrinsic  semilattice  entropy of the composition flow $((S,d),\phi^k)$.

\begin{lemma}\label{1/2logarithmic law}
Let $((S,d),\phi)\in\Flow_{\Lqm}$ and $k \in \N$. If $x$ is ${\phi}$-inert, then 
\(\widetilde h(\phi, T_k(\phi,x) )= \widetilde h(\phi, x).\)
\end{lemma}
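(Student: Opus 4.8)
The plan is to reduce the whole statement to a single index-shift identity and then squeeze the relevant distances between two comparable quantities. First I would record that both sides make sense: since $x$ is $\phi$-inert, Lemma~\ref{wannabelogarithmic law} shows that $T_k(\phi,x)$ is again $\phi$-inert, so $\widetilde h(\phi,T_k(\phi,x))$ exists by Theorem~\ref{existence:limit}, and $d(x,T_k(\phi,x))<\infty$ by Proposition~\ref{inert:corollary}. Since $T_1(\phi,x)=x$ there is nothing to prove when $k=1$, so I would work with $k\geq 1$, the range in which the trajectory identity of Remark~\ref{rem:Tn+m} is available, and note that the computation below is uniform there.

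The key step is the identity
\[
T_n(\phi,T_k(\phi,x))=T_{n+k-1}(\phi,x)\qquad(n\in\N_+),
\]
which is Remark~\ref{rem:Tn+m} with $i=1$ and $m=k$. Writing $a_m:=d(x,T_m(\phi,x))$, this turns the target into $\widetilde h(\phi,T_k(\phi,x))=\lim_{n\to\infty}\frac{d(T_k(\phi,x),T_{n+k-1}(\phi,x))}{n}$. I would also isolate the auxiliary limit $\lim_{n\to\infty}\frac{a_{n+k-1}}{n}=\widetilde h(\phi,x)$: factoring $\frac{a_{n+k-1}}{n}=\frac{a_{n+k-1}}{n+k-1}\cdot\frac{n+k-1}{n}$, the first factor converges to $\widetilde h(\phi,x)$ by Theorem~\ref{existence:limit} and the second to $1$, and the product of limits is legitimate because $\widetilde h(\phi,x)$ is finite (by subadditivity, Proposition~\ref{prop:subadd}, it is at most $d(x,\phi(x))<\infty$).

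The upper bound $\widetilde h(\phi,T_k(\phi,x))\leq\widetilde h(\phi,x)$ then follows from $x\leq T_k(\phi,x)$ and monotonicity~\ref{1mon}, which give $d(T_k(\phi,x),T_{n+k-1}(\phi,x))\leq a_{n+k-1}$; dividing by $n$ and letting $n\to\infty$ via the auxiliary limit yields the claim. For the reverse bound I would apply the triangle inequality~\ref{qm2} in the form
\[
a_{n+k-1}\leq d(x,T_k(\phi,x))+d(T_k(\phi,x),T_{n+k-1}(\phi,x)),
\]
divide by $n$, and pass to the limit: the left-hand side tends to $\widetilde h(\phi,x)$, the term $\tfrac{d(x,T_k(\phi,x))}{n}$ vanishes since $d(x,T_k(\phi,x))$ is a fixed finite constant, and the remaining term tends to $\widetilde h(\phi,T_k(\phi,x))$. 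This gives $\widetilde h(\phi,x)\leq\widetilde h(\phi,T_k(\phi,x))$, and the two inequalities combine to the asserted equality.

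The one genuine obstacle is the bookkeeping between the two ``clocks'': the limit defining $\widetilde h(\phi,T_k(\phi,x))$ runs over the trajectory length $n$ of $T_k(\phi,x)$, while the comparison distances naturally carry the length $n+k-1$ of $x$. Verifying that the rescaling factor $\frac{n+k-1}{n}\to 1$ does not alter the limiting value --- precisely the point where finiteness of $\widetilde h(\phi,x)$ is used --- is the only step requiring care; everything else is a direct application of~\ref{1mon} and~\ref{qm2}.
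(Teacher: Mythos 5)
Your proposal is correct and follows essentially the same route as the paper's proof: both hinge on the index-shift identity $T_n(\phi,T_k(\phi,x))=T_{n+k-1}(\phi,x)$ from Remark~\ref{rem:Tn+m}, prove the upper bound from $x\leq T_k(\phi,x)$ (your appeal to~\ref{1mon} is the same as the paper's use of $d(T_k(\phi,x),x)=0$ together with~\ref{qm2}), and prove the lower bound by the triangle inequality through $T_k(\phi,x)$, discarding the finite constant $d(x,T_k(\phi,x))$ and absorbing the rescaling factor $\tfrac{n+k-1}{n}\to 1$. The only differences are cosmetic (an index shift of one in the lower bound and your explicit remark on the finiteness of $\widetilde h(\phi,x)$), so there is nothing to correct.
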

\begin{proof}
Let $k\in \N$ and $x\in S$ be ${\phi}$-inert. Then $T_k(\phi,x)$ is $\phi$-inert by Lemma~\ref{wannabelogarithmic law}. 
%
%
Let $n \in \N_+$. Remark~\ref{rem:Tn+m} gives
\begin{align*}\widetilde h(\phi, T_k(\phi,x) ) &=\lim_{n\to\infty} \frac{d( T_k(\phi,x), T_{n}(\phi,T_k(\phi,x)))}{n} 
\\&= \lim_{n\to\infty} \frac{d( T_k(\phi,x) ,T_{n+k-1}(\phi,x))}{n}.\end{align*}
Since $x\leq T_k(\phi,x)$,  $d( T_k(\phi,x), x)=0$ by \eqref{0eq}. Then by~\ref{qm2}, we obtain
\begin{align*}
\widetilde h(\phi, T_k(\phi,x) )& \leq
\lim_{n\to\infty} 
\frac{d( T_k(\phi,x), x)}{n}
 + 
\lim_{n\to\infty} \frac{d( x ,T_{n+k-1}(\phi,x))}{n} = \\
&= \lim_{n\to\infty} \frac{d( x ,T_{n+k-1}(\phi,x))}{n+k-1}\cdot\frac{n+k-1}{n}= \widetilde h(\phi,x).
\end{align*}
On the other hand,
\begin{align*}\widetilde h(\phi,x) &= \lim_{n\to\infty} \frac{d( x, T_{n+k}(\phi,x))}{n+k} \\& \leq\lim_{n\to\infty} \frac{d( x, T_{k}(\phi,x))}{n+k} +\lim_{n\to\infty} \frac{d( T_{k}(\phi,x), T_{n+k}(\phi,x))}{n+k}.\end{align*}
As $d( x, T_{k}(\phi,x))\in \R$ and does not depend on $n$, Remark~\ref{rem:Tn+m} gives
\[\widetilde h(\phi,x) \leq \lim_{n\to\infty} \frac{d( T_{k}(\phi,x), T_{n+1}(\phi, T_{k}(\phi,x) ))}{n+1}\frac{n+1}{n+k} = \widetilde h(\phi,T_{k}(\phi,x)).\qedhere\]
\end{proof}

Then we obtain some sort of ``local'' logarithmic law passing to the trajectories.

\begin{proposition}\label{lemma0}
Let $((S,d),\phi)\in\Flow_{\Lqm}$ and $k \in \N$. 
If $x$ is $\phi^k$-inert, then
\begin{equation}\label{traj:eq}
\widetilde h(\phi^k, T_k(\phi,x) ) = k \cdot \widetilde h(\phi, T_k(\phi,x) ).
\end{equation}
Moreover, if $x$ is $\phi$-inert, then
\begin{equation}\label{traj:NEQ}
\widetilde h(\phi^k, T_k(\phi,x) ) = k \cdot \widetilde h(\phi, T_k(\phi,x) ) =  k \cdot \widetilde h(\phi, x ).
\end{equation}
\end{proposition}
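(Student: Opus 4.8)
The plan is to reduce both equalities to the combinatorial identity for iterated trajectories from Remark~\ref{rem:Tn+m}, followed by an elementary comparison of limits. Write $y=T_k(\phi,x)$ throughout and assume $k\geq 1$, the case $k=0$ being trivial since then $T_0(\phi,x)=0$ and all entropies of $0$ vanish. Since $x$ is $\phi^k$-inert, Lemma~\ref{wannabelogarithmic law} ensures that $y$ is both $\phi$-inert and $\phi^k$-inert, so $\widetilde h(\phi,y)$ and $\widetilde h(\phi^k,y)$ are well defined by Theorem~\ref{existence:limit}.

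First I would specialise Remark~\ref{rem:Tn+m} twice: with $i=k$, $m=k$ it gives $T_n(\phi^k,y)=T_{nk}(\phi,x)$, and with $i=1$, $m=k$ it gives $T_n(\phi,y)=T_{n+k-1}(\phi,x)$. Setting $a_N=d(y,T_N(\phi,x))$ for $N\geq k$ (so $a_k=0$), the defining limits become
\[
\widetilde h(\phi,y)=\lim_{n\to\infty}\frac{a_{n+k-1}}{n},\qquad
\widetilde h(\phi^k,y)=\lim_{n\to\infty}\frac{a_{nk}}{n}.
\]

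The heart of the matter is to show that the unindexed quotient $a_N/N$ converges. Writing $\frac{a_{n+k-1}}{n}=\frac{a_{n+k-1}}{n+k-1}\cdot\frac{n+k-1}{n}$ and using $\frac{n+k-1}{n}\to 1$, the existence of $\widetilde h(\phi,y)$ forces $\lim_{N\to\infty}a_N/N$ to exist and to equal $\widetilde h(\phi,y)$. Since $\{nk\}_{n\in\N_+}$ is a subsequence of $\{N\}$, the second limit then evaluates as $\widetilde h(\phi^k,y)=\lim_{n\to\infty}k\cdot\frac{a_{nk}}{nk}=k\cdot\lim_{N\to\infty}\frac{a_N}{N}=k\,\widetilde h(\phi,y)$, which is exactly~\eqref{traj:eq}.

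For~\eqref{traj:NEQ} I would observe that a $\phi$-inert $x$ is $\phi^k$-inert by Proposition~\ref{inert:corollary}, so~\eqref{traj:eq} already supplies the left equality, while the right equality $\widetilde h(\phi,T_k(\phi,x))=\widetilde h(\phi,x)$ is precisely Lemma~\ref{1/2logarithmic law}; multiplying the latter by $k$ concludes. I do not anticipate a genuine obstacle: the only delicate point is the bookkeeping in the limit-comparison step, where the affine re-indexing $N=n+k-1$ and the dilation $N=nk$ must be seen to yield the same value $\lim_{N}a_N/N$, and this rests solely on the a priori existence of the entropy limits from Theorem~\ref{existence:limit} together with $\frac{n+k-1}{n}\to 1$ and a standard subsequence argument.
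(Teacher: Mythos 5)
Your proof is correct and follows essentially the same route as the paper: Lemma~\ref{wannabelogarithmic law} for the inertness of $T_k(\phi,x)$, the trajectory identities of Remark~\ref{rem:Tn+m}, a re-indexing/subsequence limit comparison for \eqref{traj:eq}, and then Proposition~\ref{inert:corollary} together with Lemma~\ref{1/2logarithmic law} for \eqref{traj:NEQ}. The only (immaterial) difference is that you route both limits through the $\phi$-trajectories of $x$ via the auxiliary sequence $a_N=d(T_k(\phi,x),T_N(\phi,x))$, whereas the paper applies Remark~\ref{rem:Tn+m} to identify $T_n(\phi^k,T_k(\phi,x))$ directly with a $\phi$-trajectory of $T_k(\phi,x)$ and multiplies by $\frac{kn-k+1}{n}\to k$.
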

\begin{proof}
First assume that $x$ is $\phi^k$-inert. Then $T_k(\phi,x) $ is $\phi$-inert and $\phi^k$-inert by Lemma~\ref{wannabelogarithmic law}. Let $n \in \N_+$. By Remark~\ref{rem:Tn+m},
\begin{equation}\label{traj:eq1}
T_{nk}(\phi,x)=T_{n}(\phi^k, T_k(\phi,x)) = T_{kn-k+1}(\phi,T_k(\phi,x)).
\end{equation}
Then we get \eqref{traj:eq} as
\begin{align*}
\widetilde h(\phi^k, T_k(\phi,x) ) &=
\lim_{n\to\infty} \frac{d( T_k(\phi,x), T_{n}(\phi^k,T_k(\phi,x)))}{n}\\
&=\lim_{n\to\infty} \frac{d( T_k(\phi,x) ,T_{kn-k+1}(\phi,T_k(\phi,x)) )}{kn-k+1}\frac{kn-k+1}{n}\\ &=k \cdot \widetilde h(\phi, T_k(\phi,x) ).
\end{align*}

Now assume that $x$ is $\phi$-inert. Then $x$ is $\phi^k$-inert as well, so \eqref{traj:eq} ensures the first equality in \eqref{traj:NEQ}.
Moreover, Lemma~\ref{1/2logarithmic law} applies to provide the second equality in \eqref{traj:NEQ}.
\end{proof}

As an immediate consequence of Proposition~\ref{lemma0} we obtain: 

\begin{corollary}\label{ll1/2}
If $((S,d),\phi)\in\Flow_{\Lqm}$ and $k \in \N$, then $k \cdot \widetilde h(\phi) \leq \widetilde h(\phi^k).$
\end{corollary}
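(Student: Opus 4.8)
The plan is to derive Corollary~\ref{ll1/2} directly from Proposition~\ref{lemma0}, since the statement asserts only the inequality $k\cdot\widetilde h(\phi)\leq\widetilde h(\phi^k)$, which is the ``easy half'' of the logarithmic law. The key observation is that to bound the supremum $\widetilde h(\phi^k)=\sup\{\widetilde h(\phi^k,y)\mid y\in\mathcal I_{\phi^k}(S)\}$ from below, it suffices to exhibit, for each $\phi$-inert element $x$, a single $\phi^k$-inert element whose $\phi^k$-entropy equals $k\cdot\widetilde h(\phi,x)$.

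First I would fix a $\phi$-inert element $x\in S$. By Proposition~\ref{inert:corollary}, $x$ is then $\phi^k$-inert as well, so the second part of Proposition~\ref{lemma0}, namely the chain of equalities~\eqref{traj:NEQ}, applies and yields
\begin{equation*}
\widetilde h(\phi^k, T_k(\phi,x)) = k\cdot\widetilde h(\phi, x).
\end{equation*}
The element $T_k(\phi,x)$ is $\phi^k$-inert (again by Lemma~\ref{wannabelogarithmic law}, or as recorded inside the proof of Proposition~\ref{lemma0}), so it is a legitimate element over which the supremum defining $\widetilde h(\phi^k)$ ranges. Hence
\begin{equation*}
k\cdot\widetilde h(\phi, x) = \widetilde h(\phi^k, T_k(\phi,x)) \leq \widetilde h(\phi^k).
\end{equation*}

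Since this holds for every $\phi$-inert $x$, I would take the supremum over all such $x$ on the left-hand side. The right-hand side $\widetilde h(\phi^k)$ does not depend on $x$, so the supremum passes through the inequality to give $k\cdot\sup\{\widetilde h(\phi,x)\mid x\in\mathcal I_\phi(S)\}\leq\widetilde h(\phi^k)$, that is, $k\cdot\widetilde h(\phi)\leq\widetilde h(\phi^k)$, as required. (The factor $k\geq 0$ is a nonnegative constant, so it commutes with the supremum without difficulty.)

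I do not anticipate a genuine obstacle here, as all the substantive work has already been carried out in Proposition~\ref{lemma0}; the only point deserving care is the logical structure of the supremum argument. The quantity $\widetilde h(\phi^k)$ is a supremum over the possibly larger set $\mathcal I_{\phi^k}(S)$, while the elements $T_k(\phi,x)$ we produce need not exhaust this set. This is precisely why we obtain only the inequality $k\cdot\widetilde h(\phi)\leq\widetilde h(\phi^k)$ and not equality: we are lower-bounding $\widetilde h(\phi^k)$ by the contributions coming from trajectories of $\phi$-inert elements, and there may be $\phi^k$-inert elements not close to any such trajectory that could push $\widetilde h(\phi^k)$ strictly higher. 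The reverse inequality, which would require controlling \emph{every} $\phi^k$-inert element by a $\phi$-inert one, is exactly the difficulty flagged in the introduction and is not addressed by this argument.
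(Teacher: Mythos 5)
Your proof is correct and follows essentially the same route as the paper: both apply equation~\eqref{traj:NEQ} of Proposition~\ref{lemma0} to get $k\cdot\widetilde h(\phi,x)=\widetilde h(\phi^k,T_k(\phi,x))\leq\widetilde h(\phi^k)$ for each $\phi$-inert $x$, then take the supremum over $x\in\mathcal I_\phi(S)$. Your additional remarks (that $T_k(\phi,x)$ lies in $\mathcal I_{\phi^k}(S)$, and why only an inequality results) are sound elaborations of steps the paper leaves implicit.
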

\begin{proof}
Let $x\in \mathcal I_\phi(S)$. 
By \eqref{traj:NEQ}, $k\cdot\widetilde h(\phi, x )  = \widetilde h(\phi^k, T_k(\phi,x) ) \leq \widetilde h(\phi^k)$. 
Thus, $k \cdot \widetilde h(\phi) \leq \widetilde h(\phi^k)$ by taking the supremum over all $x\in\mathcal I_\phi(S)$.
\end{proof}

In the rest of this subsection we give partial results concerning the converse inequality $\widetilde h(\phi^k ) \leq k \cdot \widetilde h(\phi)$. 
We start from a ``local'' version generalizing Proposition~\ref{lemma0}, where we replace the $\phi$-inert element $T_k(\phi,x)$ that appears in \eqref{traj:eq} with a generic $\phi$-inert element of $S$.

\begin{lemma}\label{uselesslemma} 
If $((S,d),\phi)\in\Flow_{\Lqm}$, $k \in \N$ and $x$ is $\phi$-inert, then 
$\widetilde h(\phi^k, x ) \leq k \cdot \widetilde h(\phi,x)$.
\end{lemma}
\begin{proof}
Note first that (even in case $x$ is not $\phi$-inert), $T_n(\phi^k, x) \leq T_{kn-k+1}(\phi,x)$. Then
\begin{equation*}	
\widetilde h(\phi^k, x ) 
\leq \lim_{n\to\infty} \frac{d(x, T_{kn-k+1}(\phi,x))}{kn-k+1}\frac{kn-k+1}{n} 
= k \cdot \widetilde h(\phi,x).\qedhere
\end{equation*}
\end{proof}

The next corollary gives a precise description of $k\cdot \hti(\phi)$ and covers, in particular, Corollary~\ref{ll1/2}.

\begin{corollary}\label{uselesscorollary}
Let $((S,d),\phi)\in\Flow_{\Lqm}$, and $k \in \N$. 
Then 
\[ k\cdot \hti(\phi)=\sup\{\widetilde h(\phi^k,x)\mid x\in\mathcal I_\phi(S)\}\leq \hti(\phi^k).\]
\end{corollary}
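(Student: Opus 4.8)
The plan is to derive Corollary~\ref{uselesscorollary} directly from the two ingredients already established: Lemma~\ref{uselesslemma}, which gives the pointwise bound $\widetilde h(\phi^k,x)\leq k\cdot\widetilde h(\phi,x)$ for every $\phi$-inert $x$, and equation~\eqref{traj:NEQ} of Proposition~\ref{lemma0}, which gives $k\cdot\widetilde h(\phi,x)=\widetilde h(\phi^k,T_k(\phi,x))$ for $\phi$-inert $x$. The middle quantity $\sup\{\widetilde h(\phi^k,x)\mid x\in\mathcal I_\phi(S)\}$ is exactly what links these, so the corollary is really the assertion that two suprema agree and that this common value is dominated by $\widetilde h(\phi^k)$.

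First I would prove the equality $k\cdot\hti(\phi)=\sup\{\widetilde h(\phi^k,x)\mid x\in\mathcal I_\phi(S)\}$ by a double inequality. For ``$\geq$'', note that for each $\phi$-inert $x$ one has $\widetilde h(\phi^k,x)\leq k\cdot\widetilde h(\phi,x)\leq k\cdot\widetilde h(\phi)$ by Lemma~\ref{uselesslemma} and the definition of $\hti(\phi)$ as a supremum, so taking the supremum over $x\in\mathcal I_\phi(S)$ yields $\sup\{\widetilde h(\phi^k,x)\mid x\in\mathcal I_\phi(S)\}\leq k\cdot\hti(\phi)$. For ``$\leq$'', I would use \eqref{traj:NEQ}: for every $\phi$-inert $x$, $k\cdot\widetilde h(\phi,x)=\widetilde h(\phi^k,T_k(\phi,x))$, and since $T_k(\phi,x)$ is itself $\phi$-inert (Lemma~\ref{wannabelogarithmic law}), the value $\widetilde h(\phi^k,T_k(\phi,x))$ appears in the set $\{\widetilde h(\phi^k,y)\mid y\in\mathcal I_\phi(S)\}$. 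Hence $k\cdot\widetilde h(\phi,x)\leq\sup\{\widetilde h(\phi^k,y)\mid y\in\mathcal I_\phi(S)\}$ for each $\phi$-inert $x$; taking the supremum over $x$ gives $k\cdot\hti(\phi)\leq\sup\{\widetilde h(\phi^k,x)\mid x\in\mathcal I_\phi(S)\}$, completing the equality.

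For the final inequality $\sup\{\widetilde h(\phi^k,x)\mid x\in\mathcal I_\phi(S)\}\leq\hti(\phi^k)$, I would observe that by Proposition~\ref{inert:corollary} every $\phi$-inert element is $\phi^k$-inert, so $\mathcal I_\phi(S)\subseteq\mathcal I_{\phi^k}(S)$; thus the set on the left is a subset of $\{\widetilde h(\phi^k,x)\mid x\in\mathcal I_{\phi^k}(S)\}$, whose supremum is precisely $\hti(\phi^k)$ by definition. A supremum over a smaller index set cannot exceed the supremum over the larger one, giving the claimed bound.

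There is no real obstacle here: the work has all been front-loaded into Lemma~\ref{uselesslemma}, Proposition~\ref{lemma0}, and Proposition~\ref{inert:corollary}, and the corollary is a clean bookkeeping argument matching suprema over nested or image sets. The only point demanding mild care is making sure that in the ``$\leq$'' half of the equality the element $T_k(\phi,x)$ is genuinely an admissible index for the supremum, which is guaranteed by its $\phi$-inertness from Lemma~\ref{wannabelogarithmic law}; and noting that the statement subsumes Corollary~\ref{ll1/2}, since chaining the equality with the final inequality reproduces $k\cdot\hti(\phi)\leq\hti(\phi^k)$.
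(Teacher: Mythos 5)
Your proposal is correct and follows essentially the same route as the paper: one direction of the equality via Lemma~\ref{uselesslemma}, the other via \eqref{traj:NEQ} together with the $\phi$-inertness of $T_k(\phi,x)$, and the final inequality from the containment $\mathcal I_\phi(S)\subseteq\mathcal I_{\phi^k}(S)$. The only (cosmetic) difference is that the paper routes the first direction through $\widetilde h(\phi,x)=\widetilde h(\phi,T_k(\phi,x))$ via Lemma~\ref{1/2logarithmic law}, a step your more direct bound $k\cdot\widetilde h(\phi,x)\leq k\cdot\hti(\phi)$ makes unnecessary.
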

\begin{proof} 
The second inequality follows from the fact that $\mathcal I_\phi(S)\subseteq \mathcal I_{\phi^k}(S)$.
Let $x\in\mathcal I_\phi(S)$; then $y = T_k(\phi,x)\in\mathcal I_\phi(S)$ by Proposition~\ref{inert:corollary}. Respectively from Lemma~\ref{uselesslemma} and Lemma~\ref{1/2logarithmic law}, it follows that
$\widetilde h(\phi^k, x ) \leq k \cdot \widetilde h(\phi,x ) = k \cdot \widetilde h(\phi, y )$.  So, $\sup\{\widetilde h(\phi^k,x)\mid x\in\mathcal I_\phi(S)\}\leq k\cdot \hti(\phi)$. 
To prove the converse inequality, apply \eqref{traj:NEQ} to obtain 
$$k \cdot \widetilde h(\phi,x)=\widetilde h(\phi^k,T_k(\phi,)) \leq \sup\{\widetilde h(\phi^k,x)\mid x\in\mathcal I_\phi(S)\}.$$
Hence, $k\cdot \hti(\phi)\leq \sup\{\widetilde h(\phi^k,x)\mid x\in\mathcal I_\phi(S)\}$.
\end{proof}

Corollary~\ref{uselesscorollary} implies that the logarithmic law holds in the following special cases. 

\begin{corollary}
Let $((S,d),\phi)\in\Flow_{\Lqm}$, and $k \in \N$. If either $\hti(\phi^k) =0$ or $\mathcal I_\phi(S)=\mathcal I_{\phi^k}(S)$, then $\hti(\phi^k) = k \cdot \widetilde h(\phi).$
\end{corollary}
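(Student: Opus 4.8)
The plan is to derive this final corollary directly from Corollary~\ref{uselesscorollary}, which already provides the master identity
\[
k\cdot \hti(\phi)=\sup\{\widetilde h(\phi^k,x)\mid x\in\mathcal I_\phi(S)\}\leq \hti(\phi^k).
\]
Since one inequality $k\cdot\hti(\phi)\le\hti(\phi^k)$ is unconditional, my entire task reduces to proving the reverse inequality $\hti(\phi^k)\le k\cdot\hti(\phi)$ under each of the two hypotheses. This means I only need to control the supremum defining $\hti(\phi^k)$, namely $\sup\{\hti(\phi^k,y)\mid y\in\mathcal I_{\phi^k}(S)\}$, and show it does not exceed $k\cdot\hti(\phi)$.

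The first case is essentially trivial: if $\hti(\phi^k)=0$, then since $\hti$ takes values in $\R_{\ge0}\cup\{\infty\}$ and Corollary~\ref{ll1/2} gives $k\cdot\hti(\phi)\le\hti(\phi^k)=0$, we force $k\cdot\hti(\phi)=0=\hti(\phi^k)$ immediately. I would dispatch this in one line. The second case is the substantive one and hinges on comparing the two index sets of the suprema. The key observation is that when $\mathcal I_\phi(S)=\mathcal I_{\phi^k}(S)$, the set appearing inside the supremum in Corollary~\ref{uselesscorollary}, namely $\{\widetilde h(\phi^k,x)\mid x\in\mathcal I_\phi(S)\}$, is \emph{literally the same set} as the one defining $\hti(\phi^k)=\sup\{\widetilde h(\phi^k,y)\mid y\in\mathcal I_{\phi^k}(S)\}$, because the index ranges over the identical subsemilattice. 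Hence the two suprema coincide, and Corollary~\ref{uselesscorollary} yields $k\cdot\hti(\phi)=\hti(\phi^k)$ directly.

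The main (very mild) obstacle to watch for is making sure the equality of index sets genuinely matches the supremum in Corollary~\ref{uselesscorollary}: that corollary's middle term is $\sup\{\widetilde h(\phi^k,x)\mid x\in\mathcal I_\phi(S)\}$, and the definition of $\hti(\phi^k)$ is $\sup\{\widetilde h(\phi^k,y)\mid y\in\mathcal I_{\phi^k}(S)\}$. The hypothesis $\mathcal I_\phi(S)=\mathcal I_{\phi^k}(S)$ is exactly what identifies these two suprema, so no further estimate is required. I expect the whole proof to be short, with the second case reducing to an identification of index sets rather than any genuine inequality chasing; there is no delicate limit argument because all the analytic work was already absorbed into Corollary~\ref{uselesscorollary}. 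I would phrase it as: the case $\hti(\phi^k)=0$ follows from Corollary~\ref{ll1/2}; in the case $\mathcal I_\phi(S)=\mathcal I_{\phi^k}(S)$, Corollary~\ref{uselesscorollary} gives $k\cdot\hti(\phi)=\sup\{\widetilde h(\phi^k,x)\mid x\in\mathcal I_\phi(S)\}=\sup\{\widetilde h(\phi^k,x)\mid x\in\mathcal I_{\phi^k}(S)\}=\hti(\phi^k)$, completing the argument.
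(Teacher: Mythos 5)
Your proof is correct and follows exactly the paper's intended route: the paper states this corollary as an immediate consequence of Corollary~\ref{uselesscorollary}, and your two cases (the zero case via the unconditional inequality $k\cdot\hti(\phi)\leq\hti(\phi^k)$, and the identification of the two suprema when $\mathcal I_\phi(S)=\mathcal I_{\phi^k}(S)$) are precisely the argument left implicit there.
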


As $\mathcal I_\phi(S) \subseteq \mathcal I_{\phi^k}(S)$ holds in general by Proposition~\ref{inert:corollary}, $\mathcal I_\phi(S)=\mathcal I_{\phi^k}(S)$ occurs for example when $\mathcal I_\phi(S)=S$. This is the case when the generalized quasimetric $d$ is a quasimetric (that is, $d$ takes only finite values), and so we obtain the following instance of the logarithmic law.

\begin{corollary} 
Let $((S,d),\phi)\in\Flow_{\Lqm}$ with $d$ a quasimetric, and let $k \in \N$. Then $\hti(\phi^k) = k \cdot \widetilde h(\phi)$.
\end{corollary}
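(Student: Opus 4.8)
The plan is to reduce the statement to the preceding corollary by checking that its hypothesis $\mathcal I_\phi(S)=\mathcal I_{\phi^k}(S)$ is automatically satisfied once $d$ is a genuine quasimetric. Recall from the definition in \S\ref{f-inert} that, for any contractive endomorphism $\psi$ of $(S,d)$, an element $x\in S$ is $\psi$-inert precisely when $d(x,\psi(x))<\infty$. So the entire content of the inertness condition is a finiteness requirement on the values of $d$.

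First I would observe that when $d$ is a quasimetric, i.e.\ it takes only finite values in $\R_{\geq0}$, the inequality $d(x,\phi(x))<\infty$ holds trivially for every $x\in S$. Hence every element of $S$ is $\phi$-inert, that is, $\mathcal I_\phi(S)=S$. The same elementary remark, applied to the contractive endomorphism $\phi^k$ (which is contractive since a composition of contractive maps is contractive), yields $\mathcal I_{\phi^k}(S)=S$ as well. Therefore
\[
\mathcal I_\phi(S)=S=\mathcal I_{\phi^k}(S),
\]
and in particular the inclusion $\mathcal I_\phi(S)\subseteq\mathcal I_{\phi^k}(S)$ of Proposition~\ref{inert:corollary} is an equality in this case.

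With this equality in hand, I would simply invoke the preceding corollary: since its hypothesis $\mathcal I_\phi(S)=\mathcal I_{\phi^k}(S)$ is met, it gives at once
\[
\hti(\phi^k)=k\cdot\hti(\phi),
\]
which is exactly the asserted instance of the logarithmic law. The only things to verify along the way are that $\phi^k$ is again a contractive endomorphism of $(S,d)$ and that the finiteness of $d$ genuinely forces both inertness sets to coincide with $S$; both are immediate, so there is no real obstacle here. The whole substance of the logarithmic law in this special case is already carried by the preceding corollary, and the quasimetric hypothesis serves only to trivialize the inertness conditions.
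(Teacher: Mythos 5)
Your proof is correct and follows exactly the paper's own route: the paper likewise notes that a quasimetric (finite-valued $d$) forces $\mathcal I_\phi(S)=S=\mathcal I_{\phi^k}(S)$, so that the hypothesis $\mathcal I_\phi(S)=\mathcal I_{\phi^k}(S)$ of the preceding corollary is satisfied and the logarithmic law follows. There is nothing to add or fix.
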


\section{Obtaining the specific entropy functions}\label{known-sec}

In the next subsections of this section we use the following scheme in order to find the known intrinsic-like entropies as intrinsic functorial entropies.

\subsection{Intrinsic functorial entropy}

As recalled in the introduction, for $\X$ a category and $F:\Flow_\X \to \Flow_{\Lqm}$ a functor, the \emph{intrinsic functorial entropy} $\hti_F$ associated to $F$ is defined by letting $\hti_F=\hti\circ F$.
We set $\hti_F(\phi)=\hti_F(X,\phi)$ for every $(X,\phi)\in\Flow_\X$ as usual.

\smallskip
The following shows that $\hti_F$ is an invariant of $\Flow_\X$.

\begin{proposition}
For every functor $F:\Flow_\X\to \Flow_{\Lqm}$, the intrinsic functorial entropy $\hti_F$ is invariant under conjugation, that is, for every $(X,\phi),(Y,\psi)\in\Flow_\X$ such that there exists an isomorphism $\alpha:(X,\phi)\to(Y,\psi)$ one has $\hti_F(\phi)=\hti_F(\psi)$.
\end{proposition}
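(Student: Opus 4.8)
The plan is to reduce the statement for $\hti_F$ to the already-established invariance of $\hti$ under conjugation, namely Corollary~\ref{inv:conj}, by pushing the isomorphism $\alpha$ through the functor $F$. The key observation is that a functor sends isomorphisms to isomorphisms: if $\alpha\colon(X,\phi)\to(Y,\psi)$ is an isomorphism in $\Flow_\X$, then $F(\alpha)$ is an isomorphism in $\Flow_{\Lqm}$. This is the structural backbone of the argument, and everything else is bookkeeping.

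First I would unpack what $F(X,\phi)$ and $F(Y,\psi)$ are. Since $F\colon\Flow_\X\to\Flow_{\Lqm}$ is a functor into the category of flows of $\Lqm$, we may write $F(X,\phi)=((S_1,d_1),\phi_1)$ and $F(Y,\psi)=((S_2,d_2),\phi_2)$ for suitable objects of $\Lqm$ and contractive endomorphisms $\phi_1,\phi_2$. By definition, $\hti_F(\phi)=\hti(\phi_1)$ and $\hti_F(\psi)=\hti(\phi_2)$, so it suffices to prove $\hti(\phi_1)=\hti(\phi_2)$.

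Next I would apply the functor to the given isomorphism. Let $\beta\colon(X,\phi)\to(Y,\psi)$ have inverse $\beta^{-1}\colon(Y,\psi)\to(X,\phi)$ in $\Flow_\X$, so $\beta^{-1}\circ\beta=\mathrm{id}_{(X,\phi)}$ and $\beta\circ\beta^{-1}=\mathrm{id}_{(Y,\psi)}$. Since $F$ is a functor it preserves composition and identities, so $F(\beta^{-1})\circ F(\beta)=F(\mathrm{id}_{(X,\phi)})=\mathrm{id}_{F(X,\phi)}$ and likewise $F(\beta)\circ F(\beta^{-1})=\mathrm{id}_{F(Y,\psi)}$. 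Hence $F(\beta)\colon((S_1,d_1),\phi_1)\to((S_2,d_2),\phi_2)$ is an isomorphism in $\Flow_{\Lqm}$ with inverse $F(\beta^{-1})$.

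Finally I would invoke Corollary~\ref{inv:conj} directly to the isomorphism $F(\beta)$, which yields $\hti(\phi_2)=\hti(\phi_1)$, that is, $\hti_F(\psi)=\hti_F(\phi)$, as required. I do not expect any genuine obstacle here: the only point meriting care is that $F$ lands in $\Flow_{\Lqm}$ rather than merely in $\Lqm$, so that $F(\beta)$ is automatically a \emph{flow} isomorphism (intertwining $\phi_1$ and $\phi_2$) and Corollary~\ref{inv:conj} applies verbatim. If instead $F$ were induced by an underlying functor $\X\to\Lqm$, one would additionally remark that such a functor sends the flow isomorphism $\beta$ to a flow isomorphism via the construction of $\overline{F}$ recalled in the Notation section; but in the stated generality the functoriality of $F$ alone suffices.
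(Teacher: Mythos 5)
Your proposal is correct and follows essentially the same route as the paper: both arguments push the isomorphism $\alpha$ through $F$ and then invoke Corollary~\ref{inv:conj} for the resulting isomorphism in $\Flow_{\Lqm}$, with your version merely spelling out more explicitly that functors preserve isomorphisms. The only point the paper adds is a one-sentence remark that the same argument works when $F$ is contravariant (which matters for its later applications such as $\overline{\CO}^{\wedge}$ and $\overline{\LCO}^{\wedge}$), a case your write-up does not explicitly mention but which goes through verbatim since contravariant functors also send isomorphisms to isomorphisms.
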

\begin{proof} Assume that $F:\Flow_\X\to \Flow_{\Lqm}$ is covariant.
By hypothesis, $\psi= \alpha\circ \phi \circ \alpha^{-1}$. Then $F(\psi) = F(\alpha)\circ F(\phi) \circ F(\alpha)^{-1}$ in $\Lqm$. By Corollary~\ref{inv:conj}, $\hti_F(\psi)=\hti(F(\psi)) = \hti(F(\phi))= \hti_F(\phi)$. For a contravariant functor $F$ one can proceed analogously.
\end{proof}

\subsection{Intrinsic (adjoint) algebraic entropy}

Let $G$ be an abelian group and let $f\colon G \to G$ be an endomorphism. A subgroup $H$ of $G$ is \emph{$f$-inert} if $\card{(H + f(H))/H}$ is finite. The  family $\mathcal I_f(G)$ of the $f$-inert subgroups of $G$ contains all finite subgroups, all finite-index subgroups, as
well as all $f$-invariant and fully invariant subgroups of $G$.  The notion of $f$-inert subgroup allowed to introduce in \cite{GBS2,DGSV} two new notions of algebraic entropy: the \emph{intrinsic algebraic entropy} and the \emph{intrinsic adjoint algebraic entropy}.  

\medskip
In detail, let $(G,f)\in\Flow_{\Ab}$, where we denote by $\Ab$ the category of abelian groups and their homomorphisms.  Given an $f$-inert subgroup $H$ of $G$, the \emph{intrinsic algebraic entropy of $f$ with respect to $H$} is
\begin{equation}
\widetilde\ent(f,H)=\lim_{n\to\infty}\frac{1}{n}\log\card{\frac{H+f (H)+\cdots+f^{n-1}(H)}{H}},
\end{equation}
and the \emph{intrinsic algebraic entropy of $f$} is $\widetilde\ent(f)=\sup\{ \widetilde\ent(f,H)\mid H\in\mathcal I_f(G)\}.$ 

On the other hand, the \emph{intrinsic adjoint algebraic entropy of $f$ with respect to $H$} is
\begin{equation}
\widetilde\ent^*(f,H)=\lim_{n\to\infty}\frac{1}{n}\log\card{\frac{H}{H\cap f^{-1}(H)\cap\dots\cap f^{-n+1}(H)}},
\end{equation}
and so the \emph{intrinsic adjoint algebraic entropy of $f$} is $\widetilde\ent^*(f)=\sup\{ \widetilde\ent^*(f,H)\mid H\in\mathcal I_f(G)\}.$

\smallskip
Hereafter, we show that the intrinsic (respectively, intrinsic adjoint) algebraic entropy is part of the general scheme  introduced in this paper, namely, we prove them to be intrinsic functorial entropies with respect to suitable functors $\Flow_{\Ab}\to\Flow_{\barLqm}$. Recall that the family $\mathcal I_f(G)$ is a bounded sublattice of the lattice of all the subgroups of $G$ (see \cite[Lemma 2.6]{DGSV}).

\subsubsection{Intrinsic algebraic entropy for abelian groups}

For an abelian group $G$, denote $\mathcal S^{\vee}(G)=(\mathcal S(G), +, \subseteq)$, that is the family $\mathcal S(G)$ of all subgroups of $G$ partially ordered by inclusion and endowed with the ordinary sum  as join-operation; the zero element of $\mathcal S^{\vee}(G)$ is the trivial subgroup. By Example~\ref{ex:ab}(a),  $(\mathcal S^{\vee}(G),d_{[\,\colon]})\in{\barLqm}$.
In addition, for a morphism $f:G\to G'$ in $\Ab$, let 
$$\mathcal S^{\vee}(f)\colon (\mathcal S^{\vee}(G),d_{[\,\colon]})\to (\mathcal S^{\vee}(G'),d_{[\,\colon]}),$$ 
mapping $H\mapsto f(H).$
This defines the functor $\mathcal S^{\vee}: \Ab \to {\barLqm}$, which induces a functor $\overline{\mathcal S}^\vee:\Flow_\Ab \to \Flow_{\barLqm}$.

\begin{theorem}
On $\Flow_\Ab$, we have $\widetilde\ent=\hti_{\overline{\mathcal S}^\vee}$.
\end{theorem}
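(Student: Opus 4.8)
The plan is to unwind both sides of the asserted equality to the very same explicit limit, element by element, and then to match the two suprema. Fix $(G,f)\in\Flow_\Ab$ and write $\phi=\mathcal S^{\vee}(f)$, so that, by definition of the intrinsic functorial entropy, $\hti_{\overline{\mathcal S}^\vee}(f)=\hti(\overline{\mathcal S}^\vee(G,f))=\widetilde h(\phi)$. Here $\phi$ is the contractive endomorphism of $S=(\mathcal S^{\vee}(G),d_{[\,\colon]})$ given by $H\mapsto f(H)$ (its contractivity being already recorded when the functor $\mathcal S^{\vee}$ was introduced), and the semilattice operation $+$ is the sum of subgroups.

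First I would identify the inert elements on the two sides. By definition $H\in\mathcal I_\phi(S)$ exactly when $d_{[\,\colon]}(H,\phi(H))=\log[H+f(H):H]<\infty$, i.e. when $[H+f(H):H]$ is finite; this is precisely the condition that $H$ be $f$-inert. Hence $\mathcal I_\phi(S)=\mathcal I_f(G)$, so the suprema defining $\widetilde h(\phi)$ and $\widetilde\ent(f)$ run over the same index set (a sublattice, by \cite[Lemma 2.6]{DGSV}).

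Next I would compute the trajectories. Since $\phi^j(H)=f^j(H)$ and the join in $S$ is $+$, the $n$-th $\phi$-trajectory is $T_n(\phi,H)=H+f(H)+\cdots+f^{n-1}(H)$. Because $H$ is one of the summands we have $H\leq T_n(\phi,H)$, whence $H+T_n(\phi,H)=T_n(\phi,H)$ and therefore
$$d_{[\,\colon]}(H,T_n(\phi,H))=\log[T_n(\phi,H):H]=\log\card{\frac{H+f(H)+\cdots+f^{n-1}(H)}{H}}.$$
Dividing by $n$ and letting $n\to\infty$ gives $\widetilde h(\phi,H)=\widetilde\ent(f,H)$ for every $f$-inert $H$: the limit on the left exists by Theorem~\ref{existence:limit}, the one on the right is the defining limit of $\widetilde\ent(f,H)$, and the two sequences are equal term by term.

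Finally, taking the supremum over $H\in\mathcal I_\phi(S)=\mathcal I_f(G)$ of the equal quantities $\widetilde h(\phi,H)=\widetilde\ent(f,H)$ yields $\widetilde h(\phi)=\widetilde\ent(f)$, that is $\hti_{\overline{\mathcal S}^\vee}(f)=\widetilde\ent(f)$. I expect no genuine obstacle here beyond bookkeeping: the entire content is the observation that $d_{[\,\colon]}$ collapses to $\log[T_n(\phi,H):H]$ precisely because $H\leq T_n(\phi,H)$, which is what turns the abstract trajectory limit of $\hti$ into the index-based definition of the intrinsic algebraic entropy. The only points deserving an explicit word are the coincidence $\mathcal I_\phi(S)=\mathcal I_f(G)$ of the inert families and the fact that $\mathcal S^{\vee}(f)$ is a bona fide contractive endomorphism, both of which are already available.
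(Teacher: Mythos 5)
Your proposal is correct and follows exactly the paper's route: the paper's (very terse) justification consists precisely of the two facts you verify, namely $\mathcal I_{\mathcal S^{\vee}(f)}(\mathcal S^\vee(G),d_{[\,\colon]})=\mathcal I_f(G)$ and the term-by-term identity $d_{[\,\colon]}(H,T_n(\mathcal S^{\vee}(f),H))=\log\card{(H+f(H)+\cdots+f^{n-1}(H))/H}$, which make the diagram commute. You have simply supplied the bookkeeping the paper leaves implicit.
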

Indeed, $\mathcal I_{\mathcal S^{\vee}(f)}(\mathcal S^\vee(G),d_{[\,\colon]})=\mathcal I_f(G)$ and  $\widetilde\ent=\widetilde h\circ \overline{\mathcal S}^\vee$ (i.e., the following diagram commutes).
$$\xymatrix@-0.8pc{
 \Flow_{\Ab}\ar[dr]_{\widetilde\ent}\ar[rr]^{\overline{\mathcal S}^\vee}&      &\Flow_{\barLqm}\ar[dl]^{\widetilde h}\\
                                       &\R_{\geq0}\cup\{\infty\}&}$$

\subsubsection{Intrinsic adjoint algebraic entropy for abelian groups}

Conversely, for an abelian group $G$, let $\mathcal S^{\wedge}(G)=(\mathcal S(G),\cap,\supseteq)$ denote the family $\mathcal S(G)$ partially ordered by inverse inclusion together with the intersection of subgroups as join-operation. The semilattice $\mathcal S^{\wedge}(G)$ has $G$ as zero element. By Example~\ref{ex:ab}(b), $(\mathcal S^{\wedge}(G),d^*_{[\,\colon]})\in{\barLqm}$.
In addition, for a morphism $f:G\to G'$ in $\Ab$, let 
$$\mathcal S^{\wedge}(f)\colon (\mathcal S^{\wedge}(G'),d^*_{[\,\colon]})\to (\mathcal S^{\wedge}(G),d^*_{[\,\colon]}),$$
 mapping $H\mapsto f^{-1}(H).$
This defines the functor $\mathcal S^{\wedge}: \Ab \to {\barLqm}$, which induces a functor $\overline{\mathcal S}^\wedge:\Flow_\Ab \to \Flow_{\barLqm}$.

\begin{theorem}
On $\Flow_\Ab$, we have $\widetilde\ent^*=\hti_{\overline{\mathcal S}^\wedge}$.
\end{theorem}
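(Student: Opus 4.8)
The plan is to mirror the proof of the preceding theorem, replacing the join-semilattice $\mathcal S^\vee(G)$ by its intersection counterpart $\mathcal S^\wedge(G)$ and keeping careful track of the order-reversal built into the \emph{contravariant} functor $\mathcal S^\wedge$. Fix a flow $(G,f)\in\Flow_\Ab$ and write $\phi=\mathcal S^\wedge(f)$ for the induced contractive endomorphism of $(\mathcal S^\wedge(G),d^*_{[\,\colon]})$; by the definition of the functor, $\phi(H)=f^{-1}(H)$, and composing preimages gives $\phi^i(H)=f^{-i}(H)$ for all $i\in\N$. Since the semilattice operation of $\mathcal S^\wedge(G)$ is intersection, the abstract $n$-th $\phi$-trajectory unwinds to the descending intersection
$$T_n(\phi,H)=H\cap f^{-1}(H)\cap\cdots\cap f^{-(n-1)}(H)\subseteq H,$$
which is exactly the subgroup appearing in the denominator of $\widetilde\ent^*(f,H)$.

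First I would establish the equality of index sets $\mathcal I_\phi(\mathcal S^\wedge(G))=\mathcal I_f(G)$, needed to match the two suprema. For a subgroup $H$, the homomorphism $H\to G/H$ obtained by restricting $f$ to $H$ and composing with the quotient map has kernel $H\cap f^{-1}(H)$ and image $(H+f(H))/H$, so the first isomorphism theorem yields the duality identity
$$d^*_{[\,\colon]}(H,\phi(H))=\log[H:H\cap f^{-1}(H)]=\log[H+f(H):H].$$
Hence $H$ is $\phi$-inert precisely when $\card{(H+f(H))/H}$ is finite, that is, exactly when $H\in\mathcal I_f(G)$.

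Next I would compute the entropy at a fixed $H\in\mathcal I_f(G)=\mathcal I_\phi(\mathcal S^\wedge(G))$. Because $T_n(\phi,H)\subseteq H$, we have $H\cap T_n(\phi,H)=T_n(\phi,H)$, so
$$d^*_{[\,\colon]}(H,T_n(\phi,H))=\log[H:T_n(\phi,H)]=\log\card{\frac{H}{H\cap f^{-1}(H)\cap\cdots\cap f^{-(n-1)}(H)}}.$$
Dividing by $n$ and letting $n\to\infty$ identifies $\hti(\phi,H)$ with $\widetilde\ent^*(f,H)$ term by term, the existence of the limit being guaranteed by Theorem~\ref{existence:limit} (since $H$ is $\phi$-inert). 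Taking the supremum over the common index set $\mathcal I_f(G)$ then gives $\hti(\phi)=\widetilde\ent^*(f)$, i.e.\ $\hti_{\overline{\mathcal S}^\wedge}=\widetilde\ent^*$ on $\Flow_\Ab$.

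The main obstacle here is bookkeeping rather than depth: one must handle the order-reversal of $\mathcal S^\wedge$ (bottom element $G$, join given by $\cap$, $\phi$ acting by preimage) so that the formal trajectory $T_n$ collapses to the correct descending intersection, and one must verify the duality $[H:H\cap f^{-1}(H)]=[H+f(H):H]$ carefully enough that the two notions of inertness genuinely coincide. Once these two points are settled, the coincidence of the defining limits is immediate.
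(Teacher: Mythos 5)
Your proof is correct and takes essentially the same route as the paper's own (very terse) justification, which rests on exactly the two facts you establish: the equality of index sets $\mathcal I_{\mathcal S^{\wedge}(f)}(\mathcal S^\wedge(G))=\mathcal I_f(G)$ and the term-by-term identification of $d^*_{[\,\colon]}(H,T_n(\mathcal S^{\wedge}(f),H))$ with the quantity defining $\widetilde\ent^*(f,H)$, after which the suprema coincide. Your first-isomorphism-theorem argument for $[H:H\cap f^{-1}(H)]=[H+f(H):H]$ simply supplies the detail that the paper leaves implicit.
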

Indeed, $\mathcal I_{\mathcal S^{\wedge}(f)}(\mathcal S^\wedge(G),d_{[\,\colon]})=\mathcal I_f(G)$ and $\widetilde\ent^*=\widetilde h\circ \overline{\mathcal S}^\wedge$ (i.e., the following diagram commutes).
$$\xymatrix@-0.8pc{
 \Flow_{\Ab}\ar[dr]_{\widetilde\ent^*}\ar[rr]^{\overline{\mathcal S}^\wedge}&      &\Flow_{\barLqm}\ar[dl]^{\widetilde h}\\
                                       &\R_{\geq0}\cup\{\infty\}&}$$

\subsubsection{Different choice of the semilattices}\label{diff:choice}

In order to obtain the intrinsic algebraic entropy and the intrinsic adjoint algebraic entropy as intrinsic functorial entropies, we can also proceed as follows.

\smallskip
For $(G,f)\in\Flow_\Ab$ 
let $\mathcal I^{\vee}_f(G)=(\mathcal I_f(G),d_{[\,\colon]})\in\barLqm$ be the subsemilattice of $\mathcal S^\vee(G)$ endowed with the generalized quasimetric induced by $d_{[\,\colon]}$. Moreover, let
$$\mathcal I^{\vee}_G(f)\colon (\mathcal I^{\vee}_f(G),d_{[\,\colon]})\to (\mathcal I^{\vee}_f(G),d_{[\,\colon]}),\ H\mapsto f(H).$$
Consequently, the assignment $(G,f)\mapsto ((\mathcal I^{\vee}_f(G),d_{[\,\colon]}),\mathcal I^{\vee}_G(f))$ produces the functor
$\mathcal I^{\vee}\colon\Flow_{\Ab}\to\Flow_{\barLqm}$, such that $\widetilde \ent=\widetilde h\circ \mathcal I^{\vee}$.
$$ \xymatrix@-0.8pc{
 \Flow_{\Ab}\ar[dr]_{\widetilde\ent}\ar[rr]^{\mathcal I^{\vee}}&      &\Flow_{\barLqm}\ar[dl]^{\widetilde h}\\
                                       &\R_{\geq0}\cup\{\infty\}&}$$

\smallskip
Analogously, let $\mathcal I^{\wedge}_f(G)=(\mathcal I_f(G),d^*_{[\,\colon]})\in\barLqm$ be the subsemilattice of $\mathcal S^\wedge(G)$ endowed with the generalized quasimetric induced by $d^*_{[\,\colon]}$.
Moreover, let
$$\mathcal I^{\wedge}_G(f)\colon (\mathcal I^{\wedge}_f(G),d_{[\,\colon]}^*)\to (\mathcal I^{\wedge}_f(G),d_{[\,\colon]}^*),\ H\mapsto f^{-1}(H).$$
This yields the functor
%
such that $\widetilde\ent^*=\hti_{\mathcal I^{\wedge}}$.
$$\xymatrix@-0.8pc{
 \Flow_{\Ab}\ar[dr]_{\widetilde\ent^*}\ar[rr]^{\mathcal I^{\wedge}}&      &\Flow_{\barLqm}\ar[dl]^{\widetilde h}\\
                                       &\R_{\geq0}\cup\{\infty\}&}$$

\subsection{Algebraic and topological entropy for locally compact groups} 

For a locally compact group $G$, denote by $\CO(G)$ the family of all compact open subgroups of $G$, which forms a neighborhood basis at $1_G$

\subsubsection{Algebraic entropy for compactly covered lca groups}

A topological group $G$ is said to be \emph{compactly covered} if each element of $G$ is contained in some compact subgroup of $G$. Let $\CC$ denote the category of compactly covered locally compact abelian groups and their continuous endomorphisms. For example, the additive group $\Q_p$ of $p$-adic rationals is an object of $\CC$. Compactly covered locally compact abelian groups are of great interest because they are the Pontryagin duals of totally disconnected locally compact abelian groups (see the next subsection). 

Let $(G,f)\in\Flow_{\CC}$. By \cite[Proposition~2.2]{DGB-BT}, 
the \emph{algebraic entropy of $f$ with respect to $U\in\mathcal{CO}(G)$} is
$$h_{alg}(f,U)=\lim_{n\to\infty}\frac{1}{n}\log[U+f(U)+\ldots+ f^{n-1}(U):U],$$
and $h_{alg}(f)=\sup\{ h_{alg}(f,U)\mid U\in\CO(G)\}$ is the \emph{algebraic entropy of $f$}.

For $G\in\CC$, we consider the semilattice $\CO^{\vee}(G)=(\CO(G)\cup\{0\},+,\subseteq)$ seen as a subsemilattice of $I_f(G)$ and so equipped with the generalized quasimetric $d_{[\,\colon]}$.
Then $(\CO^\vee(G),d_{[\,\colon]})\in\barLqm$.
Subsequently, for $f:G\to G'$ in $\CC$, let $\CO^\vee(f):\CO^\vee(G)\to\CO^\vee(G)$, $U\mapsto U+f(U)$.
This defines the functor $\CO^{\vee}\colon\CC\to\barLqm$, and so the functor $\overline{\CO}^{\vee}\colon\Flow_{\CC}\to\Flow_{\barLqm}$.

\begin{remark}
For every $(G,f)\in\Flow_{\CC}$ and every $U\in\CO(G)$, we always have $d_{[\,\colon]}(U,\CO^\vee(f)(U))=\log[U+f(U)\colon U]<\infty$, that is, 
\begin{equation}\label{I=tutto}
\CO^\vee(G)=\mathcal I_{\CO^\vee(f)}(\CO^\vee(G))\subseteq \mathcal I_f(G),
\end{equation}
so $\CO^{\vee}(G)$ is a subsemilattice of $\mathcal I_f(G)$.
\end{remark}

\begin{theorem}
On $\Flow_{\CC}$, we have $h_{alg}=\hti_{\overline{\CO}^\vee}$.
\end{theorem}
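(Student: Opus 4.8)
The goal is to prove that on $\Flow_{\CC}$ the algebraic entropy $h_{alg}$ coincides with the intrinsic functorial entropy $\hti_{\overline{\CO}^\vee}=\hti\circ\overline{\CO}^\vee$. The plan is to unwind both sides into their defining limits and match them term by term, using the preceding remark as the crucial structural input. First I would fix $(G,f)\in\Flow_{\CC}$ and recall from the remark that $\CO^\vee(G)=\mathcal I_{\CO^\vee(f)}(\CO^\vee(G))$, so that \emph{every} element of the semilattice $\CO^\vee(G)$ is $\CO^\vee(f)$-inert; this means the supremum defining $\hti(\CO^\vee(f))$ ranges over all of $\CO(G)\cup\{0\}$, with no inertness restriction lost. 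Consequently it suffices to show that the two ``local'' quantities agree, namely $h_{alg}(f,U)=\widetilde h(\CO^\vee(f),U)$ for each $U\in\CO(G)$ (the element $0$ contributes $0$ to both suprema and may be set aside), and then take suprema on both sides.

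\textbf{Matching the trajectories.} The heart of the argument is to identify the $n$-th $\phi$-trajectory of the functor with the ``algebraic'' trajectory. With $\phi=\CO^\vee(f)$, the semilattice operation is $+$ and $\phi(U)=U+f(U)$, so I would check by an easy induction that
\[
T_n(\CO^\vee(f),U)=U+f(U)+\cdots+f^{n-1}(U).
\]
The only subtlety is that $\phi(U)=U+f(U)$ rather than $f(U)$, but because the trajectory accumulates with $+$ and $U$ is already a summand, the extra copies of lower powers are absorbed idempotently; I would verify the inductive step $T_{n+1}=U+\phi(T_n)$ explicitly to confirm the two expressions coincide. Once this is in hand, the generalized quasimetric from Example~\ref{ex:ab}(a) gives
\[
d_{[\,\colon]}(U,T_n(\CO^\vee(f),U))=\log[\,U+f(U)+\cdots+f^{n-1}(U):U\,],
\]
so dividing by $n$ and passing to the limit yields precisely $\widetilde h(\CO^\vee(f),U)=h_{alg}(f,U)$, both limits existing by Theorem~\ref{existence:limit} (and by \cite[Proposition~2.2]{DGB-BT} on the algebraic side, which also guarantees the limit agrees with the limit superior in the original definition).

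\textbf{Assembling the suprema.} Finally I would take the supremum over $U\in\CO(G)$ on the equality $h_{alg}(f,U)=\widetilde h(\CO^\vee(f),U)$. The left side is $h_{alg}(f)$ by definition; the right side is $\sup\{\widetilde h(\CO^\vee(f),U)\mid U\in\mathcal I_{\CO^\vee(f)}(\CO^\vee(G))\}=\widetilde h(\CO^\vee(f))$, where the identification of the index set with the full inert semilattice is exactly the content of \eqref{I=tutto}. Hence $h_{alg}(f)=\widetilde h(\overline{\CO}^\vee(G,f))=\hti_{\overline{\CO}^\vee}(f)$, and since $(G,f)$ was arbitrary the two functions coincide on all of $\Flow_{\CC}$.

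\textbf{Main obstacle.} The step I expect to require the most care is the trajectory identification, specifically making sure that iterating $\phi(U)=U+f(U)$ reproduces $\sum_{i<n}f^i(U)$ and not some strictly larger or smaller subgroup; one must track that $\phi^{j}(U)$ unfolds into $\sum_{i\le j}f^i(U)$ and that these telescope correctly under the join so that $T_n$ is exactly the $n$-term algebraic sum. Everything else is routine once the $d_{[\,\colon]}$-distance is rewritten as a logarithm of an index and the suprema are matched via \eqref{I=tutto}.
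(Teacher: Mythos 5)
Your proof is correct and takes essentially the same route as the paper, whose entire argument is the one-line assertion that the diagram commutes by \eqref{I=tutto}: your trajectory identification $T_n(\CO^\vee(f),U)=U+f(U)+\cdots+f^{n-1}(U)$ and the matching of suprema via \eqref{I=tutto} are exactly the details that assertion encapsulates. In particular, your careful check that iterating $\phi(U)=U+f(U)$ is absorbed idempotently into the $n$-term sum is the right (and only nontrivial) verification needed.
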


Indeed, the following diagram commutes by \eqref{I=tutto}.
$$
\xymatrix@-0.8pc{
\Flow_{\CC}\ar[dr]_{h_{alg}}\ar[rr]^{\overline{\CO}^{\vee}}&      &\Flow_{\barLqm}\ar[dl]^{\widetilde h}\\
                                       &\R_{\geq0}\cup\{\infty\}&} 
                                       $$

\subsubsection{Topological entropy for  t.d.l.c. groups} 

A locally compact group $G$ is said to be \emph{totally disconnected} if the connected component of the identity $1_G$ is reduced to the singleton $\{1_G\}$. Discrete groups and profinite groups are example of totally disconnected locally compact groups. In particular, profinite groups are precisely the topological groups that are compact and totally disconnected. 

Denote by $\TDLC$ the category of totally disconnected locally compact (= t.d.l.c.) groups and their continuous homomorphisms. 
As a consequence of van Dantzig's theorem, $\CO(G)$ is a neighborhood basis at $1_G$ whenever $G\in\TDLC$. As pointed out in \cite{DG-islam,DSV}, such a property allows to define the topological entropy of continuous endomorphisms of $G$ without resorting to the Haar measure, as follows. 

Let $(G,f)\in\Flow_{\TDLC}$. The \emph{topological entropy of $f$ with respect to $U\in \CO(G)$} is 
$$h_{top}(f,U)=\lim_{n\to\infty}\frac{1}{n}\log[U\colon U\cap f^{-1}(U)\cap\dots\cap f^{-n+1}(U)],$$
and $h_{top}(f)=\sup\{ h_{top}(f,U)\mid U\in\CO(G)\}$ denotes the \emph{topological entropy of $f$}. 

\medskip
For $G\in\TDLC$ we consider the semilattice $\CO^{\wedge}(G)=(\CO(G)\cup\{G\},\cap,\supseteq)$ equipped with the generalized quasimetric $d^*_{[\,\colon]}$. 
Therefore,  $(\CO^\wedge(G),d^*_{[\,\colon]})\in\barLqm$.
Subsequently, for $f:G\to G'$ in $\TDLC$, let $\CO^\wedge(f):\CO^\wedge(G')\to\CO^\wedge(G)$, $U\cap f^{-1}(U)$.
This defines a functor $\CO^\wedge:\TDLC\to \barLqm$, which induces a functor $\overline{\CO}^{\wedge}\colon\Flow_{\TDLC}\to\Flow_{\barLqm}$.

\begin{remark}
For every $(G,f)\in\Flow_\TDLC$ and every $U\in\CO(G)$, we always have $d^*_{[\,\colon]}(U,\CO^\wedge(f)(U))=\log[U\colon U\cap f^{-1}(U)]<\infty$, that is, 
\begin{equation}\label{I=tutto'}
\CO^\wedge(G)=\mathcal I_{\CO^\wedge(f)}(\CO^\wedge(G))\subseteq \mathcal I_f(G),
\end{equation}
and in particular $\CO^{\wedge}(G)$ is a subsemilattice of $\mathcal I_f(G)$.
\end{remark}

\begin{theorem}
On $\Flow_\TDLC$, we have $h_{top}=\hti_{\overline{\CO}^\wedge}$.
\end{theorem}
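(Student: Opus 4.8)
The plan is to prove the theorem exactly as the two preceding theorems (for $\widetilde\ent$ and $\widetilde\ent^*$) were established, namely by showing that the inert elements of the functor-image flow are precisely the compact open subgroups, and that the two entropies agree term-by-term in their defining limits. Concretely, I must verify the two assertions packaged in the accompanying commutative triangle: that $\mathcal I_{\CO^\wedge(f)}(\CO^\wedge(G),d^*_{[\,\colon]})=\CO^\wedge(G)$ (this is already recorded as \eqref{I=tutto'} in the immediately preceding Remark), and that $h_{top}=\widetilde h\circ\overline{\CO}^\wedge$, i.e.\ for every $(G,f)\in\Flow_{\TDLC}$ one has $h_{top}(f)=\widetilde h(\overline{\CO}^\wedge(G,f))$.

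First I would unwind the definitions on the semilattice side. The object $\CO^\wedge(G)=(\CO(G)\cup\{G\},\cap,\supseteq)$ carries the quasimetric $d^*_{[\,\colon]}(U,V)=\log[U:U\cap V]$ from Example~\ref{ex:ab}(b), and its zero element is $G$ (the bottom for the reverse-inclusion order). The endomorphism is $\CO^\wedge(f)\colon U\mapsto U\cap f^{-1}(U)$, so I would compute the $n$-th trajectory in this semilattice: since the join is intersection and $\phi=\CO^\wedge(f)$ acts by $U\mapsto U\cap f^{-1}(U)$, one gets by induction that $\phi^j(U)=U\cap f^{-1}(U)\cap\dots\cap f^{-j}(U)$ and hence
\[
T_n(\CO^\wedge(f),U)=U\cap\phi(U)\cap\dots\cap\phi^{n-1}(U)=U\cap f^{-1}(U)\cap\dots\cap f^{-n+1}(U).
\]
This is the single key computation that needs care, because $f^{-1}$ does not commute naively with intersections unless one tracks the iterated preimages correctly; a clean induction using $f^{-1}(A\cap B)=f^{-1}(A)\cap f^{-1}(B)$ dispatches it. Given this, $d^*_{[\,\colon]}(U,T_n(\CO^\wedge(f),U))=\log[U:U\cap f^{-1}(U)\cap\dots\cap f^{-n+1}(U)]$, which is exactly the quantity inside the limit defining $h_{top}(f,U)$.

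Next I would assemble the entropies. Dividing by $n$ and letting $n\to\infty$, Theorem~\ref{existence:limit} guarantees the limit on the left exists, and it equals $h_{top}(f,U)$ by the term-by-term identity just obtained; thus $\widetilde h(\CO^\wedge(f),U)=h_{top}(f,U)$ for every $U\in\CO(G)$. Since by \eqref{I=tutto'} the inert elements of $(\CO^\wedge(G),d^*_{[\,\colon]})$ under $\CO^\wedge(f)$ are exactly all of $\CO^\wedge(G)=\CO(G)\cup\{G\}$, taking suprema gives $\widetilde h(\overline{\CO}^\wedge(G,f))=\sup_{U}\widetilde h(\CO^\wedge(f),U)=\sup_{U}h_{top}(f,U)=h_{top}(f)$; the extra bottom element $G$ contributes a zero trajectory and hence does not affect the supremum. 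I expect the main obstacle to be purely bookkeeping: verifying the iterated-preimage trajectory formula and confirming that $\CO^\wedge(f)$ is genuinely a contractive endomorphism in $\barLqm$ (contractivity amounts to $[U\cap f^{-1}(U):U\cap f^{-1}(U)\cap V\cap f^{-1}(V)]\le[U:U\cap V]$ type index inequalities, which follow from the standard second-isomorphism index estimates underlying property~\eqref{x+x'}). Everything else is a direct transcription of the argument already used for $\widetilde\ent^*$.
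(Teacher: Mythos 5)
Your core argument is the same as the paper's: the paper proves this theorem in a single line, asserting that the triangle commutes by \eqref{I=tutto'}, and what that assertion unpacks to is precisely your computation --- $\phi^j(U)=U\cap f^{-1}(U)\cap\dots\cap f^{-j}(U)$ for $\phi=\CO^\wedge(f)$, hence $T_n(\phi,U)=U\cap f^{-1}(U)\cap\dots\cap f^{-n+1}(U)$, so that $d^*_{[\,\colon]}\bigl(U,T_n(\phi,U)\bigr)$ is exactly the quantity inside the limit defining $h_{top}(f,U)$; since by \eqref{I=tutto'} every element of $\CO^\wedge(G)$ is inert, both entropies are suprema of the same numbers, the added bottom element $G$ contributing $0$. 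That part of your write-up is correct and is a fuller account of what the paper leaves implicit.

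The genuine gap is your closing parenthetical, and it is not bookkeeping: the inequality $[U\cap f^{-1}(U):U\cap f^{-1}(U)\cap V\cap f^{-1}(V)]\le[U:U\cap V]$ is \emph{false}, and it does not follow from \eqref{x+x'}. Take $G=(\Z/p\Z)^2$, a finite (hence compact, discrete) object of $\TDLC$ in which every subgroup is compact open, let $f(x,y)=(y,x)$, $U=G$ and $V=\Z/p\Z\times\{0\}$. Then $U\cap f^{-1}(U)=G$ while $V\cap f^{-1}(V)=\{0\}$, so the left-hand side is $p^{2}$ and the right-hand side is $p$ (proper subgroups work too, e.g.\ in $(\Z/p\Z)^3$ with $f(x,y,z)=(y,x,0)$). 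Thus $\CO^\wedge(f)$ is \emph{not} contractive: what \eqref{x+x'} yields, combined with contractivity of $H\mapsto f^{-1}(H)$ on $\mathcal S^\wedge(G)$, is only the Lipschitz bound $d^*_{[\,\colon]}(\phi(U),\phi(V))\le 2\,d^*_{[\,\colon]}(U,V)$, and the example shows the factor $2$ is attained. Consequently $(\CO^\wedge(G),\phi)$ is not literally a flow of $\Lqm$, so your appeal to Theorem~\ref{existence:limit} is also unjustified as stated. To be fair, you have put your finger on a defect of the paper itself: the claim that $\CO^\wedge$ is a functor into $\barLqm$, made in the setup before the theorem and never verified, is exactly this false contractivity statement (a dual example breaks $U\mapsto U+f(U)$ in the compactly covered case as well). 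The theorem nevertheless holds because the entropy computation only ever compares elements along a single trajectory: writing $C_k=U\cap f^{-1}(U)\cap\dots\cap f^{-k}(U)$, one has $[C_n:C_{n+m}]\le[U:C_m]$, since $x\mapsto f^{n}(x)C_m$ embeds the coset space $C_n/C_{n+m}$ into $U/C_m$; hence $\log[U:C_n]$ is subadditive and the limit defining $h_{top}(f,U)$ exists by Fekete's lemma. A correct version of your proof should replace the contractivity claim by this trajectory-wise argument (or invoke the classical existence of the limit for $h_{top}(f,U)$) and only then match the suprema.
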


Indeed, the following diagram commutes by \eqref{I=tutto'}.
$$\xymatrix@-0.8pc{
 \Flow_{\TDLC}\ar[dr]_{h_{top}}\ar[rr]^{\overline{\CO}^{\wedge}}&      &\Flow_{\barLqm}\ar[dl]^{\widetilde h}\\
                                       &\R_{\geq0}\cup\{\infty\}&}$$

\subsection{Algebraic and topological entropy for l.l.c.\! vector spaces}\label{ss:llc}

\subsubsection{Locally linearly compact vector spaces} 

Let $\K$ be a discrete field. A topological $\K$-vector space $V$ is {\it linearly compact} when:
\begin{enumerate}[(LC1)]
\item it is a Hausdorff space in which there is a neighborhood basis at $0$ consisting of linear subspaces of $V$;
\item any collection of closed linear varieties (i.e., closed cosets of linear subspaces) of $V$ with the finite intersection property has non-empty intersection.
\end{enumerate}
For example, finite-dimensional discrete vector spaces are linearly compact, and  compact vector spaces satisfying (LC1) are linearly compact. More precisely, every linearly compact $\K$-space is a Tychonoff product of one-dimensional $\K$-spaces, and viceversa. 
Let $\LC$ denote the category of linearly compact $\K$-vector spaces and their continuous homomorphisms. We collect here a few properties of linearly compact vector spaces (see \cite{Lef}) that we use further on. Let $W\leq V$, $U$ be  $\K$-vector spaces  satisfying condition (LC1), thus:
\begin{enumerate}[label=(lc\arabic{enumi}), ref=(lc\arabic{enumi})]
\item\label{lc:homo} if $\phi\colon V\to U$ is a surjective continuous homomorphism and $V$ is linearly compact, then $U$ is linearly compact;
\item\label{lc:closed} if $V$ is linearly compact and $W$ is closed, then $W$ is linearly compact;
\item\label{lc:main} if $V$ is discrete, then $V$ is linearly compact if and only if $V$ has finite dimension over $\K$;
\item\label{lc:ses} if $W$ is closed, then $V$ is linearly compact if and only if $W$ and $V/W$ are linearly compact.
\end{enumerate}

A topological $\K$-vector space $V$ is said to be  \emph{locally linearly compact} if the family $\LCO(V)$ of all linearly compact open linear subspaces of $V$ is a neighborhood basis at $0$. Let $\LLC$ denote the category of locally linearly compact $\K$-vector spaces and their continuous homomorphisms. 
The category $\LC$ is a full subcategory of $\LLC$, and also the category $\Vect$ of discrete $\K$-vector spaces is a full subcategory of $\LLC$.

\begin{remark}
The partially ordered set $(\LCO(V),\subseteq)$ is a lattice with join-operation given by the sum of linear subspaces (see~\ref{lc:homo}) and  meet-operation  given by the intersection (see~\ref{lc:closed}). 
The lattice  $(\LCO(V),\subseteq)$ is not bounded unless $V$ has finite dimension. If $V$ is discrete, then  $(\LCO(V),\subseteq,+)$ has as zero element $0$. If $V$ is linearly compact, then $(\LCO(V),\supseteq,\cap)$ has as zero element $V$. 
\end{remark}

\subsubsection{Algebraic Entropy for locally linearly compact vector spaces}

Following \cite{CGBalg}, for every flow $(V,f)$ over $\LLC$, the \emph{algebraic entropy of $f$ with respect to $U\in\LCO(V)$} is
$$\ent(f,U)=\lim_{n\to\infty}\frac{1}{n}\dim\frac{U+f (U)+\ldots+f^{n-1}(U)}{U},$$
and the \emph{algebraic entropy of $f$} is $\ent(f)=\sup\{ \ent(f,U)\mid U\in\LCO(V)\}.$

For $V\in\LLC$, let $\LCO^{\vee}(V)$ denote the semilattice $(\LCO(V)\cup\{0\},\subseteq,+)$ with zero element given by the trivial subspace. Recall that the trivial subspace of $V$ is not open unless $V$ is discrete, and therefore we need to add it. By Example~\ref{ex:inv}, $\LCO^{\vee}(V)$ inherits the generalized quasimetric $d_{\dim}$. Then $(\LCO^{\vee}(V),d_{\dim})\in\barLqm$.

Moreover, for a morphism $f:V\to V'$ in $\LLC$, let $\LCO^{\vee}(f):\LCO^{\vee}(V)\to \LCO^{\vee}(V')$, $U\mapsto U+f(U)$.
This gives us the functor $\LCO^{\vee}\colon{\LLC}\to{\barLqm}$, which induces the functor $\overline{\LCO}^{\vee}\colon\Flow_{\LLC}\to\Flow_{\barLqm}$.

\begin{remark}
For every $(V,f)\in\Flow_{\LLC}$ and every $U\in\LCO(V)$ we always have $d_{\dim}(U,\LCO^\vee(f)(U))=\dim(U+f(U)/U)<\infty$ by~\ref{lc:ses} and~\ref{lc:main}, that is, 
\begin{equation}\label{I=tuttodim}
\LCO^\vee(V)=\mathcal I_{\LCO^\vee(f)}(\LCO^\vee(V))\subseteq \mathcal I_f(V),
\end{equation}
and in particular $\LCO^{\vee}(V)$ is a subsemilattice of $\mathcal I_f(V)$.
\end{remark}

\begin{theorem}\label{ent=lco}
On $\Flow_{\LLC}$, we have $\ent=\hti_{\overline{\LCO}^\vee}$.
\end{theorem}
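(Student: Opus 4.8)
The plan is to unwind both sides on a fixed flow $(V,f)\in\Flow_{\LLC}$ and match them term by term. Writing $\phi=\LCO^\vee(f)$, so that $\overline{\LCO}^\vee(V,f)=((\LCO^\vee(V),d_{\dim}),\phi)$ and hence $\hti_{\overline{\LCO}^\vee}(f)=\hti(\phi)$, the theorem reduces to the single identity $\hti(\phi)=\ent(f)$. The first and main step is to identify the $\phi$-trajectories of an element $U\in\LCO(V)$ with the partial sums appearing in the definition of $\ent(f,U)$.

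Since $\phi(U)=U+f(U)$ and $f$ is linear, an easy induction gives $\phi^k(U)=U+f(U)+\cdots+f^k(U)$ for every $k\in\N$; in particular the explicit formula shows that $U=\phi^0(U)\subseteq\phi^1(U)\subseteq\cdots$ is an increasing chain, so the join $T_n(\phi,U)=\sum_{k=0}^{n-1}\phi^k(U)$ collapses to its top term, namely $T_n(\phi,U)=\phi^{n-1}(U)=U+f(U)+\cdots+f^{n-1}(U)$. As $U\subseteq T_n(\phi,U)$, the defining formula of $d_{\dim}$ from Example~\ref{ex:inv} yields $d_{\dim}(U,T_n(\phi,U))=\dim\bigl(T_n(\phi,U)/U\bigr)=\dim\frac{U+f(U)+\cdots+f^{n-1}(U)}{U}$. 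Dividing by $n$ and passing to the limit gives $\widetilde h(\phi,U)=\ent(f,U)$ for every $U\in\LCO(V)$.

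It then remains to reconcile the two suprema. By \eqref{I=tuttodim} every element of $\LCO^\vee(V)$ is $\phi$-inert, so $\mathcal I_\phi(\LCO^\vee(V))=\LCO^\vee(V)=\LCO(V)\cup\{0\}$ and the supremum defining $\hti(\phi)$ ranges over this whole set. The extra element $0$ contributes nothing, since $\phi(0)=0$ forces $T_n(\phi,0)=0$ and hence $\widetilde h(\phi,0)=0$, a value not exceeding any of the non-negative quantities $\ent(f,U)$. Therefore $\hti(\phi)=\sup\{\widetilde h(\phi,U)\mid U\in\LCO(V)\}=\sup\{\ent(f,U)\mid U\in\LCO(V)\}=\ent(f)$, which is the desired equality.

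The only delicate point is the trajectory computation of the second paragraph: one must notice that the functor encodes $f$ not as the bare image map $U\mapsto f(U)$ (which need not land in $\LCO(V)$) but as $U\mapsto U+f(U)$, and that precisely this ``$+\,U$'' causes the semilattice trajectory $T_n$ — an $n$-fold join of the iterates $\phi^k(U)$ — to collapse to the single subspace $U+f(U)+\cdots+f^{n-1}(U)$ occurring in $\ent(f,U)$. Everything else is bookkeeping, relying only on the already-established facts that $\LCO^\vee$ is a well-defined functor into $\barLqm$ and that each $\ent(f,U)\geq 0$.
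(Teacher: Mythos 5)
Your proposal is correct and follows essentially the same route as the paper: the paper's (very terse) justification consists precisely of the observation \eqref{I=tuttodim} that every element of $\LCO^\vee(V)$ is $\LCO^\vee(f)$-inert, together with the implicit identification of the semilattice trajectories $T_n(\LCO^\vee(f),U)$ with the partial sums $U+f(U)+\cdots+f^{n-1}(U)$, which you spell out via the induction $\phi^k(U)=U+f(U)+\cdots+f^k(U)$ and the collapse of the join. Your write-up simply makes explicit the bookkeeping (including the harmless extra element $0$) that the paper leaves to the reader.
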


Indeed, in view of \eqref{I=tuttodim} the following diagram commutes.
$$\xymatrix@-0.8pc{
 \Flow_{\LLC}\ar[dr]_{\ent}\ar[rr]^{\overline{\LCO}^\vee}&      &\Flow_{\barLqm}\ar[dl]^{\widetilde h}\\
                                       &\R_{\geq0}\cup\{\infty\}&}$$

%
%
%

\subsubsection{Topological Entropy for locally linearly compact vector spaces}

The topological counterpart of the algebraic entropy  for locally linearly compact vector spaces was introduced in \cite{CGBtop} as follows. The \emph{topological entropy of $f$ with respect to $U\in\LCO(V)$} is
$$\ent^*(f,U)=\lim_{n\to\infty}\frac{1}{n}\dim\frac{U}{U\cap f^{-1}(U)+\ldots+f^{-n+1}(U)},$$
and the \emph{topological entropy of $f$} is $\ent^*(f)=\sup\{ \ent^*(f,U)\mid U\in\LCO(V)\}$.

For $V\in{\LLC}$ consider the semilattice $\LCO^{\wedge}(V)$ given by $(\LCO(V)\cup\{V\},\supseteq,\cap)$; the semilattice  $\LCO^{\wedge}(V)$  has zero element $V$. We consider on $\LCO^{\wedge}(V)$ the generalized quasimetric $d^*_{\dim}$ from Example~\ref{ex:inv}.
Then $(\LCO^{\wedge}(V),d_{\dim})\in\barLqm$.
Moreover, for a morphism $f:V\to V'$ in $\LLC$, let $\LCO^{\wedge}(f):\LCO^{\wedge}(V')\to \LCO^{\wedge}(V)$, $U\mapsto U\cap f^{-1}(U)$.
This produces the functor $\LCO^{\wedge}\colon{\LLC}\to{\barLqm}$, which induces the functor $\overline{\LCO}^{\wedge}\colon\Flow_{\LLC}\to\Flow_{\barLqm}$.

\begin{remark}
For $(V,f)\!\in\!\Flow_{\LLC}$ and  $U\!\in\!\LCO(V)$, by~\ref{lc:closed},~\ref{lc:main} and~\ref{lc:ses}, 
$d^*_{\dim}(U,\LCO^\wedge(f)(U))=\dim(U/U\cap f^{-1}(U))<\infty$ , that is, 
\begin{equation}\label{I=tuttodimagain}
\LCO^\wedge(V)=\mathcal I_{\LCO^\wedge(f)}(\LCO^\wedge(V))\subseteq \mathcal I_f(V),
\end{equation}
and in particular $\LCO^{\wedge}(V)$ is a subsemilattice of $\mathcal I_f(V)$.
\end{remark}

\begin{theorem}
On $\Flow_{\LLC}$, we have $\ent^*=\hti_{\overline{\LCO}^\wedge}$.
\end{theorem}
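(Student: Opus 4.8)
The plan is to mirror the proof scheme already established for the algebraic entropy on $\LLC$ (Theorem~\ref{ent=lco}), since the topological entropy $\ent^*$ is its formal dual. The goal is to verify that the diagram commutes, that is, $\ent^*(f) = \hti(\overline{\LCO}^\wedge(V,f))$ for every flow $(V,f)\in\Flow_{\LLC}$. By definition, $\hti_{\overline{\LCO}^\wedge}(f) = \hti(\LCO^\wedge(V), \LCO^\wedge(f))$, so everything reduces to comparing the two entropies computed on the semilattice $(\LCO^\wedge(V),d^*_{\dim})$ under the contractive endomorphism $g := \LCO^\wedge(f)$, which sends $U \mapsto U\cap f^{-1}(U)$.

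First I would record that, by the Remark preceding the theorem and equation~\eqref{I=tuttodimagain}, every $U\in\LCO(V)$ is $g$-inert; in fact $\LCO^\wedge(V) = \mathcal I_{g}(\LCO^\wedge(V))$, so the supremum defining $\hti(g)$ ranges over all of $\LCO(V)\cup\{V\}$ (the added zero element $V$ contributes $0$). Thus the two suprema defining $\ent^*(f)$ and $\hti(g)$ are taken over the same index set, and it suffices to show that the local quantities agree: $\ent^*(f,U) = \hti(g,U)$ for every $U\in\LCO(V)$. Next I would unwind the $g$-trajectory $T_n(g,U)$. Since the join-operation in $\LCO^\wedge(V)$ is intersection and $g$ applies $U\mapsto U\cap f^{-1}(U)$, an easy induction (compare the computation of $T_n$ in the algebraic case) gives $T_n(g,U) = U \cap f^{-1}(U)\cap\cdots\cap f^{-n+1}(U)$, which is exactly the subspace appearing in the denominator of $\ent^*(f,U)$. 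Then, by the definition of $d^*_{\dim}$ from Example~\ref{ex:inv}, one has
\[
d^*_{\dim}(U, T_n(g,U)) = \dim\frac{U}{U\cap T_n(g,U)} = \dim\frac{U}{T_n(g,U)},
\]
using $T_n(g,U)\subseteq U$, so the two $n$-th terms coincide on the nose and dividing by $n$ and passing to the limit yields $\hti(g,U) = \ent^*(f,U)$.

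Taking the supremum over $U\in\LCO(V)$ then gives $\hti(g) = \ent^*(f)$, which is the commutativity of the triangle. The main obstacle I anticipate is not conceptual but bookkeeping: one must carefully match the indexing of trajectories, since $\ent^*(f,U)$ is defined with the sum $U\cap f^{-1}(U)+\cdots+f^{-n+1}(U)$ in the denominator, whereas our meet-semilattice trajectory produces an \emph{intersection}. Here one must check that the order-dual conventions of $\LCO^\wedge(V)=(\LCO(V)\cup\{V\},\supseteq,\cap)$ are applied consistently, so that the ``join'' in the semilattice really is intersection and the quotient dimensions line up; the properties~\ref{lc:closed},~\ref{lc:main} and~\ref{lc:ses} guarantee all these dimensions are finite, so no value $\infty$ intervenes and every limit is a genuine Fekete limit by Theorem~\ref{existence:limit}.
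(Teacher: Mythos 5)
Your proposal is correct and takes essentially the same route as the paper, whose proof is nothing more than the observation \eqref{I=tuttodimagain} plus the identifications you spell out explicitly, namely $T_n(\LCO^\wedge(f),U)=U\cap f^{-1}(U)\cap\cdots\cap f^{-n+1}(U)$ (using that preimages commute with intersections) and $d^*_{\dim}\bigl(U,T_n(\LCO^\wedge(f),U)\bigr)=\dim\bigl(U/T_n(\LCO^\wedge(f),U)\bigr)$. The ``bookkeeping obstacle'' you flag is only a typo in the paper: in the displayed definition of $\ent^*(f,U)$ the symbol `$+$' should be `$\cap$' (as in \cite{CGBtop}, dually to $h_{top}$ on $\TDLC$), so the denominator is exactly the cotrajectory that your meet-semilattice induction produces, and the two local quantities agree term by term.
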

Indeed, by \eqref{I=tuttodimagain}  the following diagram commutes.
\begin{equation}
 \xymatrix@-0.8pc{
 \Flow_{\LLC}\ar[dr]_{\ent^*}\ar[rr]^{\overline{\LCO}^{\wedge}}&      &\Flow_{\barLqm}\ar[dl]^{\widetilde h}\\
                                       &\R_{\geq0}\cup\{\infty\}&}
\end{equation}

\end{document}